\newtheorem{defi}{Definition}
\newtheorem{teo}{Theorem}
\newtheorem{prop}{Proposition}
\newtheorem{lema}{Lemma}
\newtheorem{coro}{Corollary}
\newtheorem{obser}{Remark}
\begin{document}
\title{Kernels in digraphs with colored vertices}

\author{Mucuy-kak Guevara\footnote{Universidad Nacional Aut\'onoma de M\'exico, mucuy-kak.guevara@ciencias.unam.mx}, Teresa I. Hoekstra-Mendoza\footnote{Centro de Investigaci\'on en Matem\'aticas, maria.idskjen@cimat.mx}, Miguel Licona-Velazquez\footnote{Universidad Nacional Aut\'onoma de M\'exico, eliconav23@xanum.uam.mx}}

\date{\empty}

\maketitle

\begin{abstract}
    In this paper, we introduce the concept of up-color kernel, which is a generalization of a kernel for vertex-colored digraphs. We give sufficient and necessary conditions for several families of digraphs to have an up-color kernel, as well as for certain products of digraphs.
\end{abstract}
\section{Introduction}
For general concepts, we refer the reader to \cite{CL}. In graph theory, an undirected graph coloring is an assignment of labels, which can be colors, to the vertices, the edges, or both sets of a graph. The first results about graph coloring start with the four-color conjecture, postulated by Francis Guthrie in 1852, which states that exactly four colors are sufficient to color a map so that no regions sharing a common border are colored with the same color. 
Another widely studied topic is the domination problem in undirected graphs. For example, the book Fundamentals of Domination in Graphs by Haynes et al. \cite{HHS} contains a bibliography with over 1200 papers on the subject. On the other hand, the Roman domination \cite{C}, \cite{F}, \cite{Y} describes a function that labels the set of vertices in a graph with the values 0, 1, or 2, the function is used to determine the minimum weight of a labeling satisfying that every vertex $u$ for which $f(u)=0$ is adjacent to at least one vertex $v$ for which $f(v)=2$. Also we have other kind of domination with colors, the Rainbow domination \cite{BHR}, \cite{HQ}, \cite{Y}  requires that each vertex is dominated by at least one vertex of each color. Inspired by this kind of concepts, in \cite{GGMR} define the Up-color domination, which deals with a function $c$ that labels the set of vertices in a graph with values $0, 1, 2,\ldots,$ such that the colors given to two adjacent vertices are different and $D$ a subset of vertices of the graph that is an up-color dominant which means that if for any vertex $v\notin D$ there is $d,$ a neighbor of $v$, such that $d\in D$ and $c(v)<c(d)$ and no vertex in $D$ is labeled with 0. While Roman domination gives a coloring of the graph with three colors, Rainbow domination deals with $k$ colors for $k\geq 1$ and in Up-color domination there exists a hierarchy in the vertices of the graph determined by the colors, and each vertex must be dominated by a higher colored vertex. 

Let $D=\{V(D),A(D)\}$ be a digraph where $V(D)$ denotes the set of vertices of $D$ and $A(D)$ is the set of arcs of $D$. A subset $N$ of $V(D)$ is said to be a \textit{kernel} if it is both independent (there are no arcs between any pair of vertices in $N)$ and absorbent (for all $u\in V(D)\setminus N$ there exists $v$ in $N$ such that $(u,v)\in A(D)).$ The concept of kernel was introduced in \cite{VNM} by von Newmann and Morgenstern in the context of Game Theory as a solution for cooperative $n$-players games.   Chvátal proved in \cite{CV} that determining if a digraph has a kernel is an NP-complete problem. Kernels have been studied by several authors, see for example, \cite{BG}, \cite{HV}, \cite{Matus} and \cite{RV}.
While there are many generalizations of kernel for colored digraphs, see for example \cite{SSW} and \cite{AL}, all of them are for digraphs colored by arcs. In this work with inspiration by the concepts mentioned above, mainly up-color domination and kernel, we introduce the concept of up-color kernel as follows.
 
\begin{defi}
Let $D$ be a digraph. $D$ is a $c$-colored digraph if $V(D)$ is colored with the numbers in $\{0,1,\ldots\}.$ We say that a function $c:V(D)\rightarrow \{0,1,\ldots\}$ is a $c$-coloring of $D$.
 \end{defi}

\begin{defi}\label{upkernel}
Let $D$ be a $c$-colored digraph. We say that a set $N\subset V(D)$ is an up-color absorbent set if the following two conditions hold.
\begin{enumerate}
    \item For every vertex $v\notin N$ there exists a vertex $w\in N$ with $(v,w)\in A(D)$ and $c(v)<c(w),$ and
    \item $N$ does not contain any vertex having color zero.
\end{enumerate}
 \end{defi}

 \begin{defi}
Let $D$ be a $c$-colored digraph and  $K\subset V(D)$. We say that $K$ is an up-color kernel of $D$ if $K$ is up-color absorbent and for  $v,w\in N,$  $(v,w),(w,v)\notin A(D),$ i.e $K$ is an independent set.
 \end{defi}

The concept of up-color kernel is closely related to that of a kernel in vertex-colored graphs. In particular, for a digraph to possess an up-color kernel, it must first have a kernel. Moreover, if a digraph $D$ has a unique kernel $K$, then for any $c$-coloring  of $D$ where a vertex of $K$ is assigned color zero, the digraph does not admit an up-color kernel.  Additionally, the up-color kernel is linked to the concept of Up-color domination, defined for undirected graphs, since any vertex $v\notin K$ there is a vertex $x\in K$ such that $(v,x)$ is an arc and $c(v)<c(x).$ 

Richardson theorem \cite{R} states that every digraph without a directed odd cycle has a kernel. A natural question to ask is: Does every $c$-colored digraph without a directed odd cycle have an up-color kernel? The answer is negative, see Figure \ref{Psin}, where we have a $c$-colored path such that $v,$ the vertex of out-degree zero, is colored with color $0,$ however $v$ must belong to an up-color kernel of the path but by definition this could not happen. Now we can ask: Which $c$-colored digraphs always have an up-color kernel? 

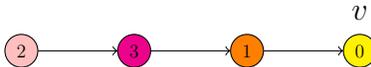
\begin{figure}[h!]
    \centering
    \begin{tikzpicture}
        \node[circle, draw, fill=pink, scale=.6] (0) at (0,0){2};
        \node[circle, draw, fill=magenta, scale=.6] (1) at (1.5,0){3};
        \node[circle, draw, fill=orange, scale=.6] (2) at (3,0){1};
        \node[circle, draw, fill=yellow, scale=.6] (3) at (4.5,0){0};
        \node (v) at (4.5,0.5){$v$};
        \draw[->] (0)--(1);
        \draw[->] (1)--(2);
        \draw[->] (2)--(3);
    \end{tikzpicture}
    \caption{$c$-colored path with out an up-color kernel.}
    \label{Psin}
\end{figure}
In Section \ref{Sec1} we work with $c$-colored digraphs having a simple structure like paths, cycles, trees, wheels, to name a few. For this kind of digraphs we give sufficient and necessary conditions to guarantee the existence of an up-color kernel.

Another line of investigation is, given a digraph with an up-color kernel: Which operations between digraphs have an up-color kernel?

A digraph (graph) product \cite{HIK} constitutes a binary operation on digraphs. It involves taking two or more digraphs, denoted as $D_{1} = (V(D_{1}),A(D_{1}))$ and $D_2 = (V(D_{2}),A(D_{2}))$ and combining them to form a new graph $D'$ with specific characteristics. 
In Section \ref{CSProduct}  we work with the Cartesian and Strong products of paths,  stars and cycles. For the  graphs obtained through  these operations, we provide necessary and sufficient conditions on the coloring to ensure the existence of an up-color kernel.

In Section \ref{ZykovS} we work with a well-known operation called Zykov sum \cite{Z}, defined as follows, let $G$ be a connected digraph and let $\mathcal{H}=\{H_v\}_{v\in V(G)}$ be a family of pairwise disjoint digraphs. The Zykov sum of $G$ and $\mathcal{H}$, denoted by $G[\mathcal{H}]$, is the digraph obtained from $G$ by replacing every vertex $v$ of $G$ with the digraph $H_v$ and there is an arc from every vertex of $H_u$ to any vertex of $H_v$ if $(u,v)\in A(G).$ For the digraph $G[\mathcal{H}]$  we give necessary and sufficient conditions on the family  $\mathcal{H}=\{H_v\}_{v\in V(G)}$  and the coloring in order to guarantee the existence of an up-color kernel in $G[\mathcal{H}]$, when the graph $G$ is either a path of a cycle. In the case where $G$ is an odd cycle, we observe that $G[\mathcal{H}]$ does not have up-color kernel, regardless of the family $\mathcal{H}=\{H_v\}_{v\in V(G)}$ and the coloring of the graphs.
  
In Section \ref{inex-crown}, inspired by the definition of generalized crown for edge-colored digraphs given in \cite{MV}, we define the following two operations. The generalized in-crown, denoted by $D\triangledown  \mathcal{H}$ and the generalized ex-crown, denoted by $D\vartriangle \mathcal{H}$ where $D$ is a $c$-colored digraph, $\mathcal{H}=\{H_{i}\}_{i\in I}$ is a sequence of pairwise disjoint $c$-colored digraphs.
The in-crown and ex-crown of $D$ and $\mathcal{H}$ are obtained from $D\sqcup \bigsqcup_{i\in I}H_i$ by adding arcs between $D$ and $\mathcal{H}$.
In both operations we give necessary and sufficient conditions on $D$, $\mathcal{H}=\{H_{i}\}_{i\in I}$ and the coloring to guarantee the existence of an up-color kernel. 

In Section \ref{L(D)} we work with a well-known operation that involves obtaining a new digraph called the line digraph from a $c$-colored digraph $D.$ The line digraph $L(D)$ with the outer coloration, has the arcs of $D$ as its vertices.  There is an arc from a vertex $x$ to a vertex $y$ if the final vertex of $x$ is the same as the initial vertex of $y$ in $D.$ We color these vertices with the color of their  final vertices in $D.$ We proved that, under a certain condition on the colors of the vertices of in-degree equal to zero and its neighbors, the number of up-color kernels in $D$  is equal to the number of up-color kernels in $L(D)$ with the outer coloration. Finally we also define the inner coloration for $L(D)$ which consists in coloring each vertex in $L(D)$  with the color of its initial vertex in $D$. We provide an example of digraph such that its line digraph inner colored does not preserve up-color kernels.
 
\section{Up-color kernels in digraphs}\label{Sec1}

 \subsection{Paths, cycles and trees}
 \begin{prop}
Let $G_{n}$ denote a directed path  with $V(G_{n})=\{x_0, \dots, x_{n}\}$, $A(G_{n})=\{(x_i,x_{i-1}):1\leq i\leq n-1\}$ 
and $c:V(G_{n})\rightarrow \{0,1,\ldots\}$ a $c$-coloring of $G_{n}$. Then $G_{n}$ has an up-color kernel if and only if $c(x_{2i})>c(x_{2i+1})$ for every $ i \geq 0.$ \end{prop}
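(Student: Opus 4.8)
The plan is to reduce the coloured statement to a purely structural fact: a directed path has a \emph{unique} kernel, and since any up-color kernel is in particular an ordinary kernel, the colouring condition is exactly what is needed for this one fixed set to be up-color absorbent. So the first step is to pin down the kernel, and then translate the two clauses of Definition \ref{upkernel} into the stated inequalities.

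First I would identify the unique kernel of $G_n$ by a greedy reading from the sink $x_0$, the only vertex of out-degree zero. Since $x_0$ has no out-neighbour it must lie in every absorbent set; then $x_1$, whose only out-neighbour is $x_0$, cannot lie in an independent set already containing $x_0$, so $x_1\notin K$; then $x_2$, whose only out-neighbour $x_1$ is now outside $K$, must itself lie in $K$ in order to be absorbed; and so on alternately. This forces $K=\{x_{2i}: 2i\le n\}$ and shows it is the unique kernel of $G_n$. I would record this as the backbone of both implications, noting that it can equally be phrased as an induction obtained by deleting the sink $x_0$ and its predecessor $x_1$ and recursing on the sub-path with sink $x_2$.

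For the ``only if'' direction, suppose $G_n$ has an up-color kernel $K^\ast$. Up-color absorbency supplies, for each $v\notin K^\ast$, an \emph{out}-neighbour of $v$ lying in $K^\ast$, so $K^\ast$ is absorbent; together with independence this makes $K^\ast$ a kernel, whence $K^\ast=K$ by uniqueness. Now every vertex outside $K$ has the form $x_{2i+1}$ and its only out-neighbour is $x_{2i}\in K$, so up-color absorbency forces $c(x_{2i+1})<c(x_{2i})$, which is precisely the asserted inequality. Conversely, assuming $c(x_{2i})>c(x_{2i+1})$ for all admissible $i$, I would verify directly that $K=\{x_{2i}\}$ is an up-color kernel: it is independent because it is the kernel, each $x_{2i+1}\notin K$ is up-color absorbed by $x_{2i}\in K$ thanks to the hypothesis, and no vertex of $K$ carries colour $0$ since each inequality $c(x_{2i})>c(x_{2i+1})\ge 0$ forces $c(x_{2i})\ge 1$.

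The conceptual crux, and the step I expect to be the main obstacle, is establishing that an up-color kernel is \emph{forced} to coincide with the unique kernel $\{x_{2i}\}$: once there is no freedom in choosing $K$, both directions collapse to checking the inequalities on a fixed set. The one delicate point to handle with care is the source endpoint: when $n$ is even the vertex $x_n$ lies in $K$ but has no odd successor $x_{n+1}$, so the listed inequalities do not by themselves rule out $c(x_n)=0$. I would make the quantifier range explicit (over pairs $x_{2i},x_{2i+1}$ that both exist) and argue separately that positivity of the top even-indexed colour is needed, so that clause (2) of Definition \ref{upkernel} is genuinely met at the boundary.
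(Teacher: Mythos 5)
Your argument is correct and follows essentially the same route as the paper: both proofs pin down $\{x_0,x_2,\dots\}$ as the only possible (up-color) kernel by propagating membership alternately from the sink $x_0$, and then read off the inequalities $c(x_{2i+1})<c(x_{2i})$ from the fact that each odd vertex has $x_{2i}$ as its unique out-neighbour; the paper merely packages the forcing step as a contradiction (if some $x_{2i+1}$ is not up-color absorbed it must lie in $K$, which cascades down to expel $x_0$) rather than as an explicit uniqueness-of-kernel lemma. The one substantive point where you go beyond the paper is the boundary observation: when $n$ is even the vertex $x_n$ belongs to the forced kernel but is constrained by no inequality of the form $c(x_{2i})>c(x_{2i+1})$, so $c(x_n)=0$ is compatible with the stated hypothesis yet violates clause (2) of Definition \ref{upkernel}; the paper's ``if'' direction silently asserts that $\{x_0,x_2,\dots\}$ is an up-color kernel without checking this clause. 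Your proposal is right to insist that the quantifier range be made explicit and that positivity of the colour of the top even-indexed vertex be added (or verified) separately.
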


\begin{proof}
    If  the above mentioned condition holds, then the set $K=\{x_0, x_{2}, \dots\}$ is an up-color kernel for $G_n.$ Now assume that $G_n$ has an up-color kernel $K$. Clearly $c(x_0)>c(x_{1})$ since otherwise either $x_0$ or $x_{1}$ is not up-color absorbed. Assume that for some $i$ we have $c(x_{2i})\leq c(x_{2i+1}).$ Then since $x_{2i+1}$ is not up-color absorbed by any vertex, we must have $x_{2i+1}\in  K$ for every up-color kernel $K.$ This means that $x_j \in K$ for every odd index $j$ but this contradicts the fact that $x_0\in K.$
\end{proof}
\begin{coro}
    Let $G$ be a directed cycle of even length and vertex set $\{x_1, \dots, x_n.\}$ Then $G$ has a up-color kernel if and only if one of the following holds:
    \begin{enumerate}
        \item $c(x_{2i})> c(x_{2i-1})$  for every $2i \in \mathbb{Z}_n$ or
       \item $c(x_{2i+1})> c(x_{2i})$ for every $2i \in \mathbb{Z}_n$. 
    \end{enumerate}
\end{coro}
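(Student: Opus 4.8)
The plan is to reduce everything to the fact that a directed even cycle has very few kernels, and then read off the color constraints. The starting observation is that an up-color kernel is in particular a kernel of the underlying digraph (independent and absorbent), so I would first determine all kernels of the even directed cycle $G$, say with arcs $(x_i,x_{i+1})$ taken mod $n$, and only afterwards impose the color inequalities coming from Definition~\ref{upkernel}.

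First I would show that $G$ has exactly two kernels, namely the parity classes $K_{\mathrm{even}}=\{x_2,x_4,\dots,x_n\}$ and $K_{\mathrm{odd}}=\{x_1,x_3,\dots,x_{n-1}\}$. The argument mirrors the proof of the preceding proposition: if $x_i$ lies in a kernel $K$, then independence rules out both neighbours $x_{i-1}$ and $x_{i+1}$ (each is joined to $x_i$ by an arc), and since the only out-neighbour of $x_{i+1}$ is $x_{i+2}$, absorbency of $x_{i+1}$ forces $x_{i+2}\in K$. Iterating around the cycle puts every vertex of the same parity as $i$ into $K$ and excludes the rest; because $n$ is even this assignment is consistent and closes up, so $K$ is one of the two parity classes. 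Hence any up-color kernel equals $K_{\mathrm{even}}$ or $K_{\mathrm{odd}}$.

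Second, I would translate the up-color absorbency requirement for each candidate. In $K_{\mathrm{even}}$ the only vertex that can absorb a given odd vertex $x_{2i-1}$ is its unique out-neighbour $x_{2i}$, so the color clause $c(x_{2i-1})<c(x_{2i})$ is precisely condition~(1); symmetrically $K_{\mathrm{odd}}$ yields condition~(2). The independence of each set is immediate from the cycle structure (consecutive indices have opposite parity), and the ``no color zero'' clause is automatic: condition~(1) gives $c(x_{2i})>c(x_{2i-1})\ge 0$, so every kernel vertex already has color at least $1$. For the converse direction I would simply check that, assuming condition~(1), the set $K_{\mathrm{even}}$ satisfies all clauses of Definition~\ref{upkernel} (and analogously $K_{\mathrm{odd}}$ under condition~(2)), producing the required up-color kernel.

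I expect the only delicate point to be the bookkeeping of indices and orientation: one must verify that conditions~(1) and~(2) correspond to the two situations in which an even vertex, respectively an odd vertex, absorbs its in-neighbour, so that the stated inequalities pair off exactly with the two parity kernels and wrap correctly around $\mathbb{Z}_n$. Once the ``exactly two kernels'' fact is in hand, the remainder is a direct verification with no real obstacle; in particular there is no need to treat the color-zero condition separately, since it is forced by the strict color inequalities.
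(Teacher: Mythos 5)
Your proof is correct and takes essentially the route the paper intends: the corollary is stated without proof as a consequence of the preceding proposition on paths, and your argument is exactly the natural adaptation of that proposition's forced-propagation argument, showing the even cycle has precisely the two parity classes as kernels and then reading off the color inequalities (with the orientation $(x_i,x_{i+1})$, which is the one that makes the stated conditions (1) and (2) match the two parity kernels exactly). The handling of the no-color-zero clause as automatic from the strict inequalities is a nice touch and is sound.
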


\begin{defi}
    Given a forest $B,$ we can subdivide its vertices in levels by considering the vertices with out-degree zero to have level zero, their in-neighborhoods which do not have level zero, to have level one and so on. If the subforest induced by the vertices of an even level has arcs, we can again subdivide it into sublevels. We continue this way until every even sublevel consists of isolated vertices.
    We say that a vertex is \textbf{even-leveled} if all of its sublevels are even and \textbf{odd-leveled} otherwise. 
\end{defi}
\begin{obser}
    The set of even vertices is an independent set, as we can see for example in Figure \ref{par}.
\end{obser}

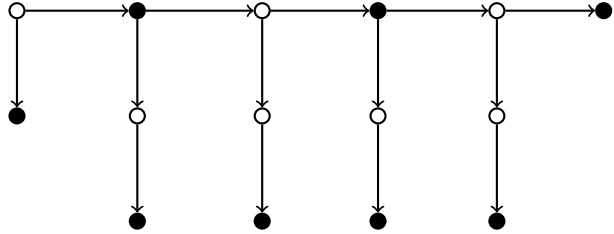
\begin{figure}[h!]
\begin{center}
 \begin{tikzpicture}[scale=2,thick]
		\tikzstyle{every node}=[minimum width=0pt, inner sep=2pt, circle]
			\draw (-3.5,2.0) node[draw] (0) {};
            \draw (-3.5,1.3) node[draw, fill=black] (10) {};
			\draw (-2.7,2.0) node[draw,  fill=black] (1) {};
			\draw (-1.87,2.0) node[draw] (2) {};
			\draw (-1.1,2.0) node[draw,  fill=black] (3) {};
			\draw (-0.31,2.0) node[draw] (4) {};
			\draw (-2.7,1.3) node[draw] (5) {};
			\draw (-1.87,1.3) node[draw] (6) {};
			\draw (-1.1,1.3) node[draw] (7) {};
			\draw (-0.31,1.3) node[draw] (8) {};
            \draw (0.4, 2) node[draw, fill=black] (9) {};
			\draw (-2.7,0.6) node[draw, fill=black] (50) {};
			\draw (-1.87,0.6) node[draw, fill=black] (60) {};
			\draw (-1.1,0.6) node[draw, fill=black] (70) {};
			\draw (-0.31,0.6) node[draw, fill=black] (80) {};
\draw[->] (4) edge (9);
\draw[->] (0) edge (10);
			\draw[->]  (0) edge (1);
			\draw[->]  (1) edge (2);
			\draw[->]  (2) edge (3);
			\draw[->]  (3) edge (4);
			\draw[->]  (1) edge (5);
			\draw[->]  (2) edge (6);
			\draw[->]  (3) edge (7);
			\draw[->]  (4) edge (8);
            \draw[->]  (5) edge (50);
			\draw[->]  (6) edge (60);
			\draw[->]  (7) edge (70);
			\draw[->]  (8) edge (80);
		\end{tikzpicture}
  \caption{The black vertices correspond to the even vertices.} \label{par}
	\end{center}
\end{figure}

\begin{teo}
    A forest $B$ has up-color kernel if and only if for every odd-leveled vertex $v$ there exists an even-leveled ex-neighbor which has greater color than $v.$
\end{teo}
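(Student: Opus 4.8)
The plan is to show that the collection of even-leveled vertices is nothing other than the (unique) kernel of $B$, after which the statement becomes a bookkeeping exercise about colors. Since $B$ is a forest, the underlying digraph has no directed cycle, so by Richardson's theorem \cite{R} it has a kernel; and because it is acyclic this kernel is \emph{unique}, being forced vertex-by-vertex by the rule ``$v$ belongs to the kernel if and only if none of its out-neighbors does,'' read in reverse topological order from the sinks. Denote this kernel by $K$. I claim $K$ is exactly the set of even-leveled vertices. Granting this, any up-color kernel is in particular independent and absorbent, hence an ordinary kernel, hence equal to $K$; the whole problem then collapses to deciding when $K$ is up-color absorbent.

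First I would prove the structural claim $K=\{\text{even-leveled vertices}\}$ by induction on $|V(B)|$. The sinks form level $0$, are even-leveled, and lie in every kernel. Their in-neighbors (level $1$) each have a sink as out-neighbor, so they are absorbed and belong to no kernel, consistently with being odd-leveled. Writing $L_0$ for the sinks, $L_1$ for their in-neighbors and $R=V(B)\setminus(L_0\cup L_1)$, every out-neighbor of a vertex of $R$ lies in $L_1\cup R$, and the $L_1$-out-neighbors are never in the kernel; hence $K\cap R$ satisfies the kernel rule on the induced subforest $B[R]$, so $K=L_0\cup K\bigl(B[R]\bigr)$. This is precisely the recursion encoded by the sublevel refinement: peeling off $L_0,L_1$ and re-examining the remaining subforest is what the passage to sublevels of an even level does. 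Applying the induction hypothesis to $B[R]$ identifies its even-leveled vertices with $K(B[R])$, and the independence of the whole even-leveled set is the Remark (illustrated in Figure~\ref{par}).

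With the claim in hand the two directions are short. For necessity, suppose $B$ has an up-color kernel; it is a kernel, so it equals $K$, and condition~1 of Definition~\ref{upkernel} applied to $K$ says exactly that every vertex $v\notin K$---that is, every odd-leveled $v$---has an out-neighbor $w\in K$ (necessarily even-leveled) with $c(v)<c(w)$, which is the asserted condition. For sufficiency, assume the color condition. Then $K$ is independent by the Remark, and each odd-leveled $v\notin K$ is, by hypothesis, absorbed by a higher-colored even-leveled out-neighbor, which lies in $K$; thus $K$ satisfies condition~1 of Definition~\ref{upkernel} and is an up-color kernel.

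The step I expect to be the main obstacle is the structural claim, specifically the verification that the even-leveled set is \emph{absorbent} (its independence being the Remark): one must check that the sublevel refinement genuinely mirrors the greedy peeling $K=L_0\cup K(B[R])$ at every stage, rather than merely reclassifying vertices within a single even level. A second, subtler point concerns condition~2 of Definition~\ref{upkernel}: every sink is even-leveled and therefore forced into $K$, so the hypothesis must be understood to include that no even-leveled vertex receives color $0$. Otherwise the bare color inequality can hold while no up-color kernel exists---for instance a star whose center is dominated by one bright leaf but which also carries a leaf of color $0$. I would make this proviso explicit and use it when verifying condition~2.
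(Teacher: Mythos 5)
Your proof is correct and follows essentially the same route as the paper's: the paper likewise asserts that every up-color kernel must contain all even-leveled vertices (hence exclude all odd-leveled ones) and that, under the color hypothesis, the even-leveled set is an up-color kernel; you have merely made explicit the underlying fact that the even-leveled set is the unique kernel of the acyclic digraph $B$, forced by the recursion from the sinks. Both caveats you flag are genuine rather than artifacts of your argument. The paper's proof never addresses condition~2 of Definition~\ref{upkernel}, and the theorem as stated does need the extra hypothesis that no even-leveled vertex receives color $0$: for the path $x_2\to x_1\to x_0$ with $c(x_2)=0$, $c(x_1)=1$, $c(x_0)=2$ the stated condition holds, yet $x_2$ is forced into any up-color kernel and carries color $0$, so none exists. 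Likewise, the identification of the even-leveled set with the greedy kernel is exactly where the paper's definition is weakest: the sublevel subdivision as written only refines arcs inside a single even level and says nothing about arcs between distinct even levels (a level-$2$ vertex may have an out-neighbor at level $4$), so the peeling recursion $K=L_0\cup K(B[R])$ you describe is the correct characterization, and the definition of ``even-leveled'' should be read as that recursion.
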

\begin{proof}
    Assume that for every odd vertex $v$ there exists an even ex-neighbor which has greater color than $v.$ Then the set of even-leveled vertices is an up-color kernel for $B.$ Now assume that $B$ contains an up-color kernel $K.$ Notice that $K$ must contain every  even-leveled vertex. This means that $K$ does not contain any odd-leveled vertex and hence every odd-leveled vertex $v$ there exists an even-leveled ex-neighbor which has greater color than $v.$
\end{proof}

\subsection{Wheels, unicyclic graphs and cycles with chords}
\begin{prop}
    Let $W_n$ denote the orientation of the wheel graph containing a directed cycle of length $n.$ Then $W_n$ has an up-color kernel if and only if one of the following conditions hold:
    \begin{enumerate}
        \item The unique vertex $v$ with $d(v)=n$ absorbs every other vertex and has the greatest color, or
        \item $n$ is even, $v$ is up-color absorbed by some vertex $w$, and $w$ belongs to an up-color kernel $K$ of $W_n\setminus \{v\}.$
    \end{enumerate}
    \end{prop}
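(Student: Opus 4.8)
The plan is to split on whether the hub $v$ belongs to the up-color kernel, using two facts: an up-color kernel is in particular a kernel (so the underlying digraph must possess an ordinary kernel), and the preceding Corollary, which describes the up-color kernels of even directed cycles. Recall that in $W_n$ the hub $v$ is adjacent to every rim vertex, while the rim itself is a directed cycle of length $n$; the spokes may be oriented arbitrarily, and I will keep track of their direction only where it matters.

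First I would prove necessity. Suppose $W_n$ has an up-color kernel $K$. If $v\in K$, then since $v$ is joined by an arc (in one direction or the other) to every rim vertex and $K$ is independent, no rim vertex can lie in $K$; hence $K=\{v\}$. Up-color absorbency of the remaining vertices then forces $(u,v)\in A(W_n)$ and $c(u)<c(v)$ for every rim vertex $u$, so every spoke is directed into $v$, the hub absorbs every other vertex, and $c(v)$ is strictly largest; this is condition $(1)$. If instead $v\notin K$, then $K$ is contained in the rim, and $v$ must be up-color absorbed by some $w\in K$, i.e. $(v,w)\in A(W_n)$ with $c(v)<c(w)$. The key observation is that $K$ is then an up-color kernel of the rim $W_n\setminus\{v\}$: it is independent there, contains no color-zero vertex, and each rim vertex $u\notin K$ is up-color absorbed by an out-neighbor in $K$; since $v\notin K$ that out-neighbor is forced to be its rim out-neighbor, so the absorption takes place entirely inside the rim. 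Hence the rim admits an up-color kernel, and since a directed cycle of odd length has no kernel (and therefore no up-color kernel), $n$ must be even, which is exactly condition $(2)$.

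For sufficiency I would reverse each construction. If condition $(1)$ holds, take $K=\{v\}$: it is trivially independent, contains no color-zero vertex (its color strictly exceeds that of the nonempty rim, so $c(v)\geq 1$), and absorbs every rim vertex with strictly larger color, hence is an up-color kernel. If condition $(2)$ holds, let $K$ be the guaranteed up-color kernel of the rim that contains $w$. Because $K$ lies on the rim and $v\notin K$, it remains independent in $W_n$ and contains no color-zero vertex; every rim vertex outside $K$ is up-color absorbed within the rim exactly as in $W_n\setminus\{v\}$, and $v$ is up-color absorbed by $w\in K$ since $(v,w)\in A(W_n)$ and $c(v)<c(w)$. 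Thus $K$ is an up-color kernel of $W_n$.

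The independence and no-color-zero verifications are routine; the one genuinely load-bearing step is the reduction in the case $v\notin K$, where I must argue that the restriction of $K$ to the rim is itself an up-color kernel of the rim cycle — this is what invokes the classical nonexistence of kernels in odd directed cycles and forces $n$ to be even. A minor point to dispatch carefully is the orientation of the spokes: condition $(1)$ requires every spoke to be directed into $v$, whereas condition $(2)$ requires only that the single spoke $(v,w)$ be directed out of $v$, the remaining spokes being irrelevant to both independence and absorbency precisely because $v\notin K$.
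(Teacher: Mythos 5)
Your proposal is correct and follows essentially the same route as the paper: split on whether the hub $v$ lies in the up-color kernel, reduce the second case to an up-color kernel of the rim cycle $W_n\setminus\{v\}$, and invoke the nonexistence of kernels in odd directed cycles to force $n$ even. Your write-up merely fills in details the paper leaves implicit (that $v\in K$ forces $K=\{v\}$, and that for $v\notin K$ the absorption of rim vertices must happen inside the rim).
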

    \begin{proof}
        Assume one of the items holds. If the first item holds, then $v$ is an up-color kernel for $W_n.$ If the second item holds, the set $K$ is an up-color kernel for $W_n.$

        Now assume that $W_n$ has an up-color kernel. Since $v$ is adjacent to every other vertex, either $v$ is the up-color kernel, or $v$ is up-color absorbed by some vertex $w.$ If $v$ is the up-color kernel then $v$ up-color-absorbs every other vertex and $v$ has the greatest color, thus the first item holds. If $v$ is up-color absorbed by $w$, then $W_n\setminus \{v\}$ must have an up-color kernel containing $w$ and since $W_n\setminus \{v\}$ is a directed cycle, $n$ must be even.
    \end{proof}
\begin{prop}\label{cicllopelo}
Let $G$ denote a graph obtained by adding an outward pending arc $(w,v)$ to a directed cycle $C$. If $C$ is of odd length then $G$ has an up-color kernel if and only if $c(v)>c(w)$ and $C \setminus \{w\}=G \setminus \{v,w\}$ has a up-color kernel.
\end{prop}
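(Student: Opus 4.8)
The plan is to exploit two facts: that the pendant vertex $v$ has out-degree zero, and that a directed odd cycle has no kernel. Label the cycle $C = u_0 u_1 \cdots u_{k-1}$ with arcs $(u_i, u_{i+1})$ (indices mod $k$, $k$ odd) and take $w = u_0$, so the pendant arc is $(u_0, v)$ and $v$ has out-degree zero in $G$. First I would record two forced memberships. Since $v$ has no out-neighbor, condition (1) of Definition \ref{upkernel} can never absorb it, so $v$ lies in every up-color kernel $K$; and since $(w,v) \in A(G)$ with $v \in K$, independence forces $w \notin K$. As $w \notin K$ it must be up-color absorbed, and its only out-neighbors in $G$ are $v$ and the cycle successor $u_1$.

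For the forward implication, the crucial step is to show that $w$ is absorbed by $v$ rather than by $u_1$. Suppose instead $w$ were absorbed within the cycle, i.e.\ $u_1 \in K$ with $c(u_1) > c(w)$. Then I claim $K \cap V(C)$ is an ordinary kernel of $C$: it is independent as a subset of $K$, and every $u_i \notin K$ is absorbed inside $C$, since for $i \neq 0$ the only out-neighbor of $u_i$ is $u_{i+1} \in V(C)$, while $w = u_0$ is absorbed by $u_1$ by assumption. This contradicts the classical fact that an odd directed cycle has no kernel (equivalently, the earlier remark that an up-color kernel forces a kernel). Hence $w$ is absorbed by $v$, which gives $c(v) > c(w)$. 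It then remains to check that $K \setminus \{v\}$ is an up-color kernel of the directed path $C \setminus \{w\} = u_1 \to \cdots \to u_{k-1}$: independence and the no-zero-color condition are inherited, the interior absorptions $u_i \to u_{i+1}$ are unaffected by deleting $w$, and the sink $u_{k-1}$ must already lie in $K$ because its sole out-neighbor in $G$ is $w \notin K$.

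For the converse I would set $K = K_0 \cup \{v\}$, where $K_0$ is the assumed up-color kernel of $C \setminus \{w\}$, and verify the three requirements directly. Independence holds because the only arcs of $G$ absent from $C \setminus \{w\}$ are those incident to $w$ or the arc $(w,v)$, and $w \notin K$ while $v$ has out-degree zero. The color-zero condition holds since $c(v) > c(w) \ge 0$ forces $c(v) \ge 1$ and $K_0$ already avoids color zero. Absorption holds because $w$ is absorbed by $v$ thanks to $c(v) > c(w)$, while every other non-kernel cycle vertex $u_i$ keeps its absorbing out-neighbor $u_{i+1}$ from $K_0$ (the sink $u_{k-1}$ being in $K_0$, as it has out-degree zero in the path). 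I expect the main obstacle to be the forward direction's parity argument, namely pinning down that $w$ cannot be absorbed internally; the odd-cycle-has-no-kernel fact is precisely the leverage needed there, and the remaining verifications are routine bookkeeping about which arcs survive the deletion of $w$.
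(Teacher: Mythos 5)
Your proposal is correct and follows essentially the same route as the paper's proof: both force $v$ into every up-color kernel (out-degree zero) and $w$ out of it, use the fact that an odd directed cycle has no kernel to conclude that $w$ must be absorbed by $v$ (hence $c(v)>c(w)$), and then restrict the kernel to the path $C\setminus\{w\}$. Your write-up is somewhat more explicit than the paper's (which phrases the necessity as a contrapositive and leaves the odd-cycle contradiction terse), but the underlying argument is the same.
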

\begin{proof}
    If $C\setminus \{w\}$ has a up-color kernel $K$, then $K\cup \{v\}$ is an up-color kernel for $G.$
    Now assume that either $c(v)\leq c(w)$ or $C\setminus \{w\}$ does not have an up-color kernel. Notice first that since $d^+(v)=0$, $v$ must belong to every up-color kernel of $G. $ If $c(v)\leq c(w)$ then $v$ does not up-color-absorbs any vertex of $C$ and thus an up-color kernel of $G$ would contain an up-color kernel of $C$ which is impossible. Now assume that $C\setminus \{w\}$ does not contain an up-color kernel. Since $w$ can not belong to a kernel of $G,$ if $K$ is an up-color kernel of $G$, then $K\setminus \{v\}$ is an up-color kernel of $C\setminus \{w\}.$
\end{proof}

\begin{prop}
Let $H$ be a graph without an up-color kernel, $I$ a maximum independent set and $J=\{w_1,\ldots, w_r\}$ the set of vertices which are not up-color absorbed by $I.$  Define $G$ a graph obtained by adding outward pending arcs $(w_i,v)_i$ to $H$. Then $G$ has an up-color kernel if and only if $c(v_i)>c(w_i)$ and $H \setminus J =G \setminus (\bigcup\limits_{i=1}^r\{v_i,w_i\})$ has a up-color kernel.
\end{prop}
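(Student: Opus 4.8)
The plan is to treat this as the multi-vertex generalization of Proposition \ref{cicllopelo}: each pendant vertex $v_i$ has out-degree zero, so it is forced into every up-color kernel, and its only role is to up-color absorb $w_i$. For a kernel $K$ of $G$ I will write $K'=K\cap V(H)=K\setminus\{v_1,\dots,v_r\}$, and I will pass between $G$ and $H\setminus J$ by deleting $\bigcup_i\{v_i,w_i\}$, exactly as $\{v,w\}$ was deleted in the single-cycle case.

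For sufficiency, suppose $c(v_i)>c(w_i)$ for every $i$ and let $K$ be an up-color kernel of $H\setminus J$. I claim $K\cup\{v_1,\dots,v_r\}$ is an up-color kernel of $G$. Independence is immediate: $K$ is independent in $H\setminus J$, the $v_i$ are pairwise non-adjacent, and the only neighbor of each $v_i$ is $w_i\notin V(H\setminus J)\supseteq K$. For absorption, every vertex of $H\setminus J$ outside $K$ is up-color absorbed by $K$ through an arc of $H\setminus J$, which survives in $G$, while each $w_i$ is up-color absorbed by $v_i$ since $(w_i,v_i)\in A(G)$ and $c(w_i)<c(v_i)$. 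No vertex of the set has color zero, because $K$ has none and $c(v_i)>c(w_i)\ge 0$ forces $c(v_i)\ge 1$.

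For necessity, let $K$ be an up-color kernel of $G$. As $d^{+}(v_i)=0$, each $v_i\in K$, whence $c(v_i)\ge 1$ and, by independence, $w_i\notin K$. First I would check that $K'$ is an up-color kernel of $H\setminus J$: it is independent and color-zero-free as a subset of $K$, and it contains no $w_i$; moreover any $u\in V(H\setminus J)\setminus K'$ is up-color absorbed in $G$ by some $x\in K$, and since the only in-neighbor of $v_i$ is $w_i\notin V(H\setminus J)$, the absorber $x$ cannot be a $v_i$, so $x\in K'$ and the absorbing arc lies in $H\setminus J$. Thus $H\setminus J$ has an up-color kernel.

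The remaining point, $c(v_i)>c(w_i)$ for \emph{every} $i$, is where I expect the real difficulty. Suppose $c(w_j)\ge c(v_j)$ for some $j$; then $v_j$ fails to absorb $w_j$, so $w_j$ is up-color absorbed in $G$ by some $x\in K'$ via an arc of $H$, i.e.\ $K'$ up-color absorbs $w_j$ \emph{inside} $H$. The trouble is that this, by itself, does not contradict $H$ having no up-color kernel: $K'$ already absorbs all of $H\setminus J$, but it need not absorb the other $w_i$, so $K'$ is only an up-color kernel of $H$ minus the $w_i$ it misses, and a single ``bad'' index does not force a kernel of $H$. One natural attempt is to follow absorptions alternately through $K'$ and through $I$ (using that $I$ is itself an up-color kernel of $H\setminus J$ absorbing no vertex of $J$): starting from $x\in K'\setminus I$ one gets a strictly increasing color chain $c(w_j)<c(x)<c(p)<\cdots$, but it can terminate harmlessly inside $K'\cap I$, so the chain alone does not close the argument. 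The crux, therefore, is to exploit that $I$ is a \emph{maximum} independent set and that $J$ is exactly the set it fails to absorb, in order to prove the sharper statement that \emph{no} up-color kernel of $H\setminus J$ can up-color absorb any $w_i\in J$ (the alternative being that the $w_i$ not absorbed by $K'$ could be adjoined to $K'$ to produce an up-color kernel of $H$, contradicting the hypothesis). Once this is established, every $w_i$ must be absorbed by its own $v_i$, yielding $c(v_i)>c(w_i)$ for all $i$ and completing the proof.
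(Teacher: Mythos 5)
Your sufficiency argument and the part of necessity showing that $K'=K\cap V(H)$ is an up-color kernel of $H\setminus J$ are correct, and you have put your finger on exactly the right gap: the necessity of $c(v_i)>c(w_i)$ for \emph{every} $i$ does not follow from the single-pendant argument of Proposition \ref{cicllopelo}, because with several pendants one bad index only shows that $K'$ absorbs that one $w_j$, not all of $J$. (For what it is worth, the paper's own proof is literally the sentence ``analogous to Proposition \ref{cicllopelo}'' and does not supply this step either.) However, your proposal stops short of a proof, and the reason you give for abandoning the alternating-chain argument is incorrect: the chain cannot ``terminate harmlessly inside $K'\cap I$.'' Suppose $w_j\in J$ is up-color absorbed by some $x_1\in K'$. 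Then $x_1\notin I$, since otherwise $I$ would up-color absorb $w_j$, contradicting $w_j\in J$. Since $x_1\in V(H)\setminus(I\cup J)$, by the definition of $J$ there is $z_1\in I$ with $(x_1,z_1)\in A(H)$ and $c(x_1)<c(z_1)$; independence of $K'$ and the arc $(x_1,z_1)$ force $z_1\notin K'$. Since $z_1\in I\subseteq V(H\setminus J)$ and $z_1\notin K'$, the kernel $K'$ must up-color absorb $z_1$ via some $x_2\in K'$ with $c(z_1)<c(x_2)$; independence of $I$ and the arc $(z_1,x_2)$ force $x_2\notin I$. Thus every $K'$-absorber in the chain is expelled from $I$ and every $I$-absorber is expelled from $K'$, so the alternation never stops and $c(w_j)<c(x_1)<c(z_1)<c(x_2)<\cdots$ is an infinite strictly increasing sequence of colors in a finite digraph, a contradiction.

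Hence no up-color kernel of $H\setminus J$ can up-color absorb any vertex of $J$, which is precisely the ``sharper statement'' you asked for; it then follows that each $w_i$ must be absorbed by its own $v_i$, giving $c(v_i)>c(w_i)$ for all $i$. Note that this argument needs the reading $J\subseteq V(H)\setminus I$ (so that $I\cap J=\emptyset$ and every vertex outside $I\cup J$ is absorbed by $I$); maximality of $I$ plays no further role. With this paragraph inserted, your proof is complete; without it, the necessity direction is genuinely unproved.
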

\begin{proof}
    The proof is analogous to the proof of Proposition \ref{cicllopelo}.
\end{proof}

\begin{coro}
Let $H$ be a graph with an up-color kernel and define $G$  a graph obtained by adding an outward pending arc $(w,v)$ to $H$. Then $G$ has an up-color kernel if and only if $c(v)>c(w)$ and $H \setminus \{w\}=G \setminus \{v,w\}$ has a up-color kernel or $0< c(v) \leq c(w)$  and $H$ has an up-color kernel which does not contain $w$.
\end{coro}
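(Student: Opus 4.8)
The plan is to build on the analysis behind Proposition \ref{cicllopelo}, exploiting that the added vertex $v$ is a sink. First I would observe that since $d^+(v)=0$, the vertex $v$ can be up-color absorbed by no one, so $v$ must belong to every up-color kernel $K$ of $G$; consequently $c(v)>0$ (a kernel contains no color-zero vertex), and since $(w,v)\in A(G)$, independence forces $w\notin K$. Thus $w$ must be up-color absorbed, and the whole argument hinges on \emph{who} absorbs $w$: either $v$, which is possible exactly when $c(v)>c(w)$, or some vertex of $H$.

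For the sufficiency direction I would treat the two disjuncts separately and exhibit explicit kernels. If $c(v)>c(w)$ and $K'$ is an up-color kernel of $H\setminus\{w\}$, then I claim $K'\cup\{v\}$ is an up-color kernel of $G$: the sink $v$ needs no out-neighbor, it up-color absorbs $w$ because $c(v)>c(w)$, every other vertex of $H\setminus\{w\}$ is absorbed by $K'$, and independence is immediate since $v$'s only neighbor $w$ lies outside $K'$. If instead $0<c(v)\le c(w)$ and $K''$ is an up-color kernel of $H$ with $w\notin K''$, then $K''\cup\{v\}$ works: here $w$ is already absorbed inside $H$ by $K''$, the colors involved are positive, and again $v$ introduces no conflict.

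For the necessity direction I would start from an up-color kernel $K$ of $G$, record $v\in K$, $c(v)>0$ and $w\notin K$ as above, and split on the color comparison. If $c(v)>c(w)$, I would verify that $K\cap V(H)=K\setminus\{v\}$ is an up-color kernel of $H\setminus\{w\}$: independence is inherited, and any vertex $u\in V(H)\setminus\{w\}$ outside $K$ is absorbed in $G$ by some vertex of $K$ which is neither $v$ (only $w$ is adjacent to $v$) nor $w$ (not in $K$), so that absorbing arc survives in $H\setminus\{w\}$. If $0<c(v)\le c(w)$, then $v$ cannot absorb $w$, so $w$ is absorbed by some $x\in K\cap V(H)$; I would then check that $K\cap V(H)$ is an up-color kernel of $H$ itself not containing $w$, using the same transfer-of-absorbers argument, now also covering $u=w$ via $x$. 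These cases are exhaustive because $c(v)>0$ guarantees that one of $c(v)>c(w)$ or $0<c(v)\le c(w)$ holds.

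The only genuinely delicate point will be the bookkeeping in the necessity direction: confirming that restricting $K$ to $V(H)$ yields an absorbent set in the correct subgraph, i.e. that deleting $w$ (in the first case) does not strand any vertex that was absorbed only through $w$. This is handled by the observation that $w\notin K$ is never itself an absorber, so no absorption is lost upon its removal; the remaining checks (independence and positivity of colors) are routine. I would also remark that the hypothesis that $H$ has an up-color kernel is precisely what distinguishes this statement from the preceding proposition, in which $H$ had none, thereby activating the second disjunct.
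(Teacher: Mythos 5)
Your argument is correct and follows essentially the same route the paper takes: the paper leaves this corollary unproved, deferring to the proof of Proposition \ref{cicllopelo}, whose key steps (the sink $v$ lies in every up-color kernel, forcing $c(v)>0$ and $w\notin K$, after which the dichotomy on whether $v$ or a vertex of $H$ absorbs $w$ yields the two disjuncts) are exactly the ones you carry out. Your write-up merely fills in the routine verifications the paper omits, and the delicate point you flag (no absorption is lost when $w$ is deleted, since $w\notin K$ never serves as an absorber) is handled correctly.
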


\begin{coro}
Let $H$ be a graph with an up-color kernel and define $G$ a graph obtained by adding outward pending arcs, $(w_1,v_1), \ldots (w_l, v_l)$, to  $H$. Then $G$ has an up-color kernel if and only if $c(v_i)>c(w_i)$ for all $i=1\ldots l$ and $H \setminus \{w_1\ldots, w_l\}=G \setminus \{v_i,w_i\}_{i=1,\ldots, l}$ has a up-color kernel or $0< c(v_j) \leq c(w_j)$ for some $j$ and $H$ has an up-color kernel which does not contain $w_j$, for those $j$.
\end{coro}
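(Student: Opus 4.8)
The plan is to mimic the argument of Proposition \ref{cicllopelo} and of the preceding corollary, but to treat the $l$ pending arcs simultaneously by first recording what any up-color kernel of $G$ must look like. First I would note that each added vertex $v_i$ satisfies $d^+(v_i)=0$, so it can never be up-color absorbed and therefore must lie in every up-color kernel $K$ of $G$; in particular $c(v_i)>0$ for all $i$, and since the only arc incident to $v_i$ is $(w_i,v_i)$, independence of $K$ forces $w_i\notin K$ for all $i$. The next step is to classify the arcs: for an index $i$, the vertex $v_i$ up-color absorbs $w_i$ precisely when $(w_i,v_i)\in A(G)$ and $c(w_i)<c(v_i)$, i.e. when $c(v_i)>c(w_i)$; I will call such indices \emph{type A}, and the remaining indices (where $0<c(v_i)\le c(w_i)$) \emph{type B}.

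For the forward direction I would take an up-color kernel $K$ of $G$ and set $K'=K\cap V(H)=K\setminus\{v_1,\dots,v_l\}$. By the observations above $K'$ is independent in $H$ and avoids every $w_i$. Any vertex $u\in V(H)\setminus K'$ must be up-color absorbed by $K$; since the only new arcs are the $(w_i,v_i)$ and these point into the sinks $v_i$, the only vertices of $H$ that $K$ can absorb through a new arc are the $w_i$, and this happens exactly for the type-A indices. Hence every vertex of $H$ except the type-A $w_i$ is up-color absorbed within $H$ by $K'$, so $K'$ is an up-color kernel of $H\setminus\{w_i:\ i\text{ type A}\}$ that in addition contains no type-B vertex $w_j$. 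When all indices are type A this is exactly an up-color kernel of $H\setminus\{w_1,\dots,w_l\}$, and when some index is type B it yields an up-color kernel of $H$ missing each such $w_j$, which matches the two alternatives of the statement.

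For the backward direction I would start from the up-color kernel $K'$ provided by the relevant alternative and set $K=K'\cup\{v_1,\dots,v_l\}$. Independence is immediate, since each $v_i$ is adjacent only to $w_i\notin K'$ and the $v_i$ are pairwise non-adjacent; the color-zero condition holds because $c(v_i)>0$ and $K'$ inherits it from being an up-color kernel. For absorption: each type-A vertex $w_i$ is absorbed by $v_i$ (as $c(v_i)>c(w_i)$), each type-B vertex $w_j$ is absorbed by $K'$ inside $H$ (as $K'$ is a kernel of $H$ avoiding $w_j$), every other vertex of $H$ is absorbed by $K'$ exactly as in $H$, and the $v_i$ require no absorption. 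Thus $K$ is an up-color kernel of $G$.

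The verifications of independence and of the color condition are routine; the step I expect to require the most care is the bookkeeping in the \emph{mixed} case, where type-A and type-B arcs coexist. There one must check that deleting the type-A vertices $w_i$ from $H$ removes no arc that $K'$ uses to absorb another vertex (which holds because no $w_i$ lies in $K'$), while the type-B vertices $w_j$ must be kept so that $K'$ can still absorb them. Reconciling this analysis with the disjunctive phrasing of the statement is the only genuinely delicate point, and it proceeds exactly as in Proposition \ref{cicllopelo}.
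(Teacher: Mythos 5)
The paper states this corollary without any proof (it is left as a consequence of Proposition~\ref{cicllopelo} and the single-arc corollary preceding it), so there is no argument of the paper's to compare against; your direct argument is the natural one, and its skeleton is right: every $v_i$ is a sink, hence lies in every up-color kernel $K$ of $G$, forcing $c(v_i)>0$ and $w_i\notin K$, and the type-A/type-B classification of the indices is exactly the bookkeeping the multi-arc statement needs. The two \emph{pure} cases (all indices type A, or all type B) are handled correctly and completely.

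However, the mixed case is not merely ``delicate''; as written your proof has a genuine gap there in both directions, and deferring it to ``exactly as in Proposition~\ref{cicllopelo}'' does not close it, since that proposition has only one pending arc and no mixed case. In the forward direction you correctly establish that $K'=K\cap V(H)$ is an up-color kernel of $H\setminus\{w_i: i\text{ type A}\}$ avoiding the type-B $w_j$, but you then assert this ``yields an up-color kernel of $H$ missing each such $w_j$'': it does not, because nothing forces $K'$ to absorb the type-A vertices $w_i$ \emph{inside} $H$ (they are absorbed only by the deleted sinks $v_i$), so $K'$ need not be a kernel of $H$ at all. Symmetrically, in the backward direction your independence claim ``each $v_i$ is adjacent only to $w_i\notin K'$'' is unjustified for a type-A index $i$ when the hypothesis of the second alternative only provides a kernel $K'$ of $H$ avoiding the type-B $w_j$; if that $K'$ contains a type-A $w_i$, then $K'\cup\{v_1,\dots,v_l\}$ is not independent and the construction fails. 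What your analysis actually proves is the per-index statement: $G$ has an up-color kernel if and only if there is a single set $K'$ that is an up-color kernel of $H\setminus\{w_i: c(v_i)>c(w_i)\}$, contains no $w_i$, and absorbs every $w_j$ with $0<c(v_j)\le c(w_j)$. That is the version you should state and prove; the corollary's disjunctive phrasing is at fault, but a proof must either adopt the corrected formulation or explicitly bridge the discrepancy rather than noting it and moving on.
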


\begin{teo}
Let $G$ be an odd cycle with one chord (which we allow to be a double arc). Then $G$ has an up-color kernel if and only if there exists a vertex $v\in V(G)$ which up-color-absorbs two vertices $u,w$ and  the subgraph $G\setminus\{u,v,w\}$ which is a path or the union of two paths, has an up-color kernel.
\end{teo}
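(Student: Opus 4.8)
The plan is to mirror the strategy of Proposition \ref{cicllopelo}, viewing the chord as the mechanism that repairs the parity obstruction of the odd cycle. Write the cycle as $x_1\to x_2\to\cdots\to x_n\to x_1$ with $n$ odd, and let the chord be the arc $(x_a,x_b)$ (the double-arc case simply adds the reverse arc and should be handled symmetrically). Since every vertex of a directed cycle has in-degree and out-degree one, the chord creates exactly one vertex of in-degree two, namely its head $v:=x_b$, whose two in-neighbors are $u:=x_{b-1}$ (its predecessor on the cycle) and $w:=x_a$ (the tail of the chord). Hence $v$ is the only candidate for a vertex that up-color-absorbs two vertices, and $\{u,w\}$ is forced. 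I would first record that deleting $\{u,v,w\}$ breaks the cycle into the two directed paths $P_1$ (from the source $x_{b+1}$ to the sink $x_{a-1}$) and $P_2$ (from the source $x_{a+1}$ to the sink $x_{b-2}$), which is precisely the ``path or union of two paths'' in the statement.

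For the sufficiency direction I would assume that $v$ up-color-absorbs $u,w$ and that $G\setminus\{u,v,w\}=P_1\sqcup P_2$ has an up-color kernel $K'$, and propose $K:=K'\cup\{v\}$. Absorption is immediate: $u$ and $w$ are absorbed by $v$ by hypothesis, while every remaining vertex lies in $P_1\sqcup P_2$ and is absorbed there by $K'$ along arcs that survive in $G$. The independence of $K$ is the delicate point and I postpone it to the final step.

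For the necessity direction I would exploit the odd parity of the cycle. Marking each $x_i$ according to whether it lies in a putative up-color kernel $K$, independence forbids two consecutive marked vertices, and for every vertex other than $x_a$ the unique out-neighbor forces the alternation $x_i\in K\iff x_{i+1}\notin K$. Because an odd cycle admits no proper alternation, a defect is unavoidable, and the only admissible one is a pair of consecutive unmarked vertices at $x_a,x_{a+1}$, with $x_a$ absorbed through the chord. This forces $v=x_b\in K$, whence $c(u),c(w)<c(v)$, so $v$ up-color-absorbs both $u$ and $w$; and $K\setminus\{v\}$ is then an up-color kernel of $P_1\sqcup P_2$.

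The hard part will be the independence check in the sufficiency direction: in $G$ the vertex $v=x_b$ still has the out-neighbor $x_{b+1}$, which is exactly the source of $P_1$, so I must guarantee $x_{b+1}\notin K'$. For a directed path the up-color kernel necessarily contains the sink and then alternates, so its source lies in the kernel precisely when the path has an odd number of vertices. The argument therefore reduces to a parity computation: using that $n$ is odd (so $|P_1|+|P_2|=n-3$ is even) together with the fact that the defect sits at $x_a,x_{a+1}$, I would show that whenever $G$ genuinely admits an up-color kernel the path $P_1$ has an even number of vertices, forcing $x_{b+1}\notin K'$ and making $K=K'\cup\{v\}$ independent. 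Pinning down this parity bookkeeping is the main obstacle, and I would also need to treat the degenerate cases in which $P_1$ or $P_2$ is empty or a single vertex, as well as the extra symmetric candidate for $v$ that arises when the chord is a double arc.
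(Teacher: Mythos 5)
Your necessity argument is sound and, if anything, more direct than the paper's (the paper goes through a case analysis involving an auxiliary even cycle, whereas your alternation--defect argument immediately pins down that the only admissible defect is $x_a,x_{a+1}\notin K$, forcing $x_b\in K$). The genuine gap is in the sufficiency direction, and it sits exactly where you flag it: you never establish $x_{b+1}\notin K'$, you only announce a plan for doing so. That plan cannot be carried out. You propose to deduce the needed parity of $P_1$ ``whenever $G$ genuinely admits an up-color kernel,'' but in the sufficiency direction the existence of an up-color kernel of $G$ is the conclusion, not a hypothesis, so invoking it is circular. Worse, the parity really can fail: with the chord $(x_a,x_b)$ one has $|P_2|=b-a-2$ and $|P_1|=n-1-(b-a)$, so both paths have an odd number of vertices precisely when $b-a$ is odd, and in that case the source $x_{b+1}$ belongs to the unique up-color kernel of $P_1$, so $K'\cup\{x_b\}$ is never independent.

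This is not repairable bookkeeping but a counterexample to the implication you are trying to prove (and, incidentally, to the paper's own one-line justification that ``$K\cup\{v\}$ is an up-color kernel,'' which silently assumes the same independence). Take $n=7$, chord $(x_1,x_4)$, and colors $c(x_4)=c(x_7)=2$ with $c(x_i)=1$ otherwise. Then $v=x_4$ up-color-absorbs $u=x_3$ and $w=x_1$, and $G\setminus\{u,v,w\}=\{x_2\}\sqcup(x_5\to x_6\to x_7)$ has the up-color kernel $\{x_2,x_5,x_7\}$, so the right-hand side of the equivalence holds; yet chasing forced memberships around the cycle ($x_4\in K\Rightarrow x_1\in K$, contradicting independence along the chord; $x_4\notin K\Rightarrow x_3\in K\Rightarrow\cdots\Rightarrow x_4\in K$) shows that $G$ has no kernel at all. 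Sufficiency therefore holds only when $b-a$ is even, equivalently when the directed cycle through the chord is even; any completion of your proof must add this hypothesis (for a double arc exactly one of the two orientations satisfies it, which is what selects the correct candidate $v$ in that case).
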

\begin{proof}
  If $G\setminus \{u,v,w\}$ has an up-color kernel $K$ then $K\cup \{v\}$ is an up-color kernel for $G.$
  Now assume that $G$ has an up-color kernel. If every vertex of $K$ absorbs only one vertex, we can either find a kernel for an odd cycle which is impossible, or there exists an even cycle $C$ which has an up-color kernel $K_c$ such that $G\setminus C=P$ is a path of odd length having a kernel $K_p$ with $K=K_p\cup K_c.$  Let $P=\{x_1,\dots, x_{2k+1}\}$ and notice that there exist vertices $c_1, c_n\in V(C)$ such that $(c_1,x_1), (x_n,c_n), (c_1, c_n)\in A(G).$ Since $P$ is a path, $x_n\in K_p$ and since $P$ is of odd length, $x_1\in K_p$ but this means that $c_1, c_n\notin K_c$ which is a contradiction.
  This means that there exists $v\in V(G)$ which up-color-absorbs two vertices $u$ and $w$ and since $v$ is the only vertex of $G$ having in-degree two, $v$ must belong to every up-color kernel of $G.$ Since $ u,w\notin K$, $K\setminus\{v\}$ is an up-color kernel for $G\setminus\{u,v,w\}.$
\end{proof}

\begin{lema}
A complete digraph always has an up-color kernel and a tournament has an up-color kernel if and only if the vertex with greatest color has out-degree zero.
\end{lema}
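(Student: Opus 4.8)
The plan is to exploit that in both a complete digraph and a tournament any two distinct vertices are joined by at least one arc, so that any independent set --- in particular any up-color kernel --- contains at most one vertex; moreover, when the vertex set is nonempty a kernel cannot be empty, so an up-color kernel must be a single vertex $\{k\}$. This reduces the whole problem to deciding when one vertex can up-color-absorb all the others. I would also record at the outset that, because the coloring is proper and every pair of vertices is adjacent, all colors are distinct; hence there is a unique vertex $w$ of maximum color, and with at least two vertices $c(w)\geq 1>0$.

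For the complete digraph, I would check directly that $\{w\}$ is an up-color kernel. Independence is trivial, condition (2) of Definition \ref{upkernel} holds since $c(w)>0$, and for every $v\neq w$ we have $(v,w)\in A(D)$ by completeness together with $c(v)<c(w)$ since $w$ is the unique maximum, giving condition (1). Hence a complete digraph always has an up-color kernel.

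For the tournament I would prove both implications. If $d^{+}(w)=0$, then in a tournament the arc between $w$ and any other vertex $v$ must point into $w$, so $(v,w)\in A(D)$ for all $v\neq w$; combined with $c(v)<c(w)$ and $c(w)>0$ this makes $\{w\}$ an up-color kernel exactly as before. Conversely, if the tournament has an up-color kernel $K=\{k\}$, then $k$ must absorb every other vertex, which forces simultaneously $(v,k)\in A(D)$ for all $v\neq k$, that is $d^{+}(k)=0$, and $c(v)<c(k)$ for all $v\neq k$, that is $k$ is the vertex of greatest color, so $k=w$. Therefore the tournament has an up-color kernel if and only if the greatest-color vertex has out-degree zero.

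The crux, rather than a serious obstacle, is the singleton reduction together with the observation that the absorbing vertex is forced to be the color-maximum one: no vertex can up-color-absorb $w$ itself, since absorption demands a strictly larger color. The only delicate point is the boundary remark that a proper coloring on at least two mutually adjacent vertices guarantees both the uniqueness of the maximum and the positivity of its color.
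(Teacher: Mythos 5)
Your proof is correct and follows essentially the same route as the paper's: independence in a complete digraph or tournament forces any up-color kernel to be a singleton, and the only candidate singleton is the vertex of greatest color. The one caveat is that you invoke properness of the coloring to obtain a \emph{unique} maximum-color vertex; the paper's definition of a $c$-coloring does not actually require properness, but its own proof (and the statement itself, which speaks of \emph{the} vertex with greatest color) tacitly needs that uniqueness too, so this is a shared implicit hypothesis rather than a defect of your argument.
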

\begin{proof}
    In both cases, an up-color kernel must consist of a unique vertex. In a complete digraph, every vertex is absorbent, thus the vertex with the greatest color is an up-color kernel. In a tournament, only vertices with out-degree zero are absorbent thus a tournament has an up-color kernel if and only if the vertex with the greatest color has out-degree zero.
\end{proof}

\section{Products}
\subsection{Cartesian and strong products}\label{CSProduct}
\begin{defi}
 Let $D_{1}=(V(D_{1}),A(D_{1}))$ and $D_{2}=(V(D_{2}),A(D_{2}))$ two digraphs. Their Cartesian product, denoted as $D_{1}\square D_{2}$, forms a digraph with a vertex set given by $V(D_{1}\square D_{2})= V(D_{1})\times V(D_{2})$ and an arc set denoted by $A(D_{1}\square D_{2})$. An element $((v_{1}, v'_{1}),(v_{2}, v'_{2}))$ belongs to $A(D_{1}\square D_{2})$ if and only if
 \begin{enumerate}
     \item $v_{1}= v_{2}$, and there is an arc $(v'_{1},v'_{2})\in A(D_{2})$ or
     \item $v'_{1}= v'_{2}$, and there is an arc $(v_{1},v_{2})\in A(D_{1}).$
 \end{enumerate}
\end{defi}

\begin{defi}
 Let $D_{1}=(V(D_{1}),A(D_{1}))$ and $D_{2}=(V(D_{2}),A(D_{2}))$ two digraphs. Their strong product, denoted as $D_{1}\boxtimes D_{2}$, forms a digraph with a vertex set given by $V(D_{1}\boxtimes D_{2})= V(D_{1})\times V(D_{2})$ and an arc set given by $A(D_{1}\boxtimes D_{2})=A(D_1\square D_2)\cup \{((x,y), (z,w)): (x,z)\in A(D_1), (y,w)\in A(D_2)\}$.
\end{defi}

\begin{teo}\label{cubo}
    Let $(G,c)$ with $G=\gamma_1 \square \dots \square \gamma_n$ be such that each $\gamma_i$ is a directed path for $1 \leq i \leq n$. Denote by $D_0$ the unique vertex in $G$ with $\delta^+(D_0)=0$ and let $D_1=N^-(D_0)$, $D_2=N^-(N^-(D_0))$ and in general $D_i=N_i^-(D_0).$ Then $G$ has an up-color kernel if and only if for every vertex $v\in D_{2i+1}$  there exists $w \in D_{2i}$ such that $(v,w)\in A(G)$ and $c(v)<c(w)$. 
\end{teo}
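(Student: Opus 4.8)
The plan is to exploit the fact that $G=\gamma_1\square\cdots\square\gamma_n$ is a graded digraph. Write each vertex as a tuple $v=(a_1,\dots,a_n)$, where $a_i$ is the position of $v$ in $\gamma_i$ measured from the sink of $\gamma_i$, and set $\ell(v)=\sum_{i=1}^n a_i$. By the definition of the Cartesian product every arc of $G$ changes exactly one coordinate and decreases it by one, so every arc drops the value of $\ell$ by exactly one. Consequently $D_0$ is the unique vertex with $\ell=0$, and a short induction identifies each $D_k=N_k^-(D_0)$ with the level set $\ell^{-1}(k)$: the in-neighbourhood of a level-$k$ set lands precisely on level $k+1$, while every vertex of positive level has an out-arc to the level below. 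In particular the $D_k$ partition $V(G)$, there are no arcs inside a single $D_k$, and every arc joins consecutive levels $D_k\to D_{k-1}$.

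I would then single out the candidate $E=\bigcup_{i\ge 0}D_{2i}$. Since arcs only run between consecutive levels and two distinct even levels differ by at least two, $E$ contains no arc and is therefore independent. The heart of the argument is to show that every up-color kernel $K$ of $G$ must coincide with $E$, which I would prove by upward induction on the level. The sink $D_0$ has out-degree zero, so it belongs to every up-color kernel; assuming $D_{2i}\subseteq K$, every $v\in D_{2i+1}$ has all of its (nonempty set of) out-neighbours inside $D_{2i}\subseteq K$, so independence forces $v\notin K$. Then every $w\in D_{2i+2}$ has all of its out-neighbours in $D_{2i+1}$, none of which lies in $K$, so $w$ cannot be absorbed and must itself lie in $K$; hence $D_{2i+2}\subseteq K$. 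The induction closes, giving $K=E$ and, in particular, uniqueness of the up-color kernel.

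With uniqueness in hand the equivalence is nearly immediate. For $v\in D_{2i+1}$ the out-neighbours of $v$ are exactly the vertices of $D_{2i}$ reachable by a single arc, all lying in $E$, so $v$ is up-color absorbed by $E$ precisely when some $w\in D_{2i}$ with $(v,w)\in A(G)$ satisfies $c(v)<c(w)$. Quantifying over all odd levels, this is exactly the displayed condition. Thus if $G$ has an up-color kernel then $K=E$ is up-color absorbent and the condition holds; conversely the condition makes $E$ up-color absorbent.

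The step I expect to be delicate is the remaining requirement of Definition \ref{upkernel}, that $E$ carry no vertex of color zero. For the sink this comes for free: each $v\in D_1$ has the sink $D_0$ as its only out-neighbour, so the condition forces $c(D_0)>c(v)\ge 0$, whence $c(D_0)\ge 1$. For the higher even levels the situation is subtler, because an even vertex need not be the absorbing vertex of any odd vertex, so the inequalities provided by the hypothesis do not by themselves rule out color zero on such a vertex. In the ``only if'' direction this causes no trouble, since $K=E$ is then a genuine up-color kernel and so automatically avoids color zero; in the ``if'' direction it is exactly the point that must be watched, and I would close the gap either by carrying the condition ``no vertex of $\bigcup_i D_{2i}$ has color zero'' alongside the displayed inequality or by verifying it from the hypothesis level by level. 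This color-zero bookkeeping on the top even levels is where the main care is required.
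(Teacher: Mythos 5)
Your argument follows the same route as the paper's: decompose $V(G)$ into the level sets $D_k$ (the paper does not introduce the grading $\ell$ explicitly, but uses exactly the facts you derive from it, namely that the $D_k$ partition $V(G)$, each $D_k$ is independent, and every arc goes from $D_k$ to $D_{k-1}$), take $E=\bigcup_i D_{2i}$ as the candidate, and show by induction on the level that any up-color kernel is forced to contain each $D_{2i}$ and avoid each $D_{2i+1}$, so that the displayed inequality is precisely up-color absorbency of $E$. So the core of your proof matches the paper's and is correct.

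The color-zero point you raise at the end is not mere bookkeeping: it is a genuine gap, and it is present in the paper's own proof, which asserts that $K=D_0\cup D_2\cup\dots$ is ``clearly up-color absorbent'' without checking item 2 of Definition \ref{upkernel}. Your worry cannot be argued away level by level. Take $n=1$ and $\gamma_1$ the path $x_2\to x_1\to x_0$ with $c(x_0)=2$, $c(x_1)=1$, $c(x_2)=0$. Then $D_0=\{x_0\}$, $D_1=\{x_1\}$, $D_2=\{x_2\}$, $D_3=\emptyset$, and the hypothesis of Theorem \ref{cubo} holds (only the vacuous case and $c(x_1)<c(x_0)$ are required), yet the unique candidate $E=\{x_0,x_2\}$ contains a color-zero vertex, and since your induction shows $E$ is the only possible up-color kernel, $G$ has none. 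Hence the ``if'' direction is false as stated, and of your two proposed repairs only the first is viable: one must add the hypothesis that no vertex of $\bigcup_{i\ge 1} D_{2i}$ has color zero (for $D_0$ it is automatic once $D_1\neq\emptyset$, as you note). With that amendment your proof is complete and is more careful than the one in the paper.
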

\begin{proof}
    We are going to prove first that $G$ has an up-color kernel. Notice first that each $D_i$ is an independent set.
    Let $K=D_0\cup D_2\cup \dots \cup D_{2m}$  where $D_{2m}$ is either the unique vertex with in-degree zero or its out-neighborhood. Then clearly $K$ is up-color absorbent and since every arc in $G$ goes from a vertex in $D_i$ to a vertex in $D_{i-1}$ for some $ 1 \leq i \leq m$ we have that $K$ is also an independent set. Thus $K$ is an up-color kernel.

    Now assume that $G$ has an up-color kernel $K. $ Then $D_0 \in K$ thus $D_1 \cap K=\emptyset$ and hence for  every $v \in D_1$ we have $c(v)<c(D_0).$ Since $D_1\cap K=\emptyset$ and  $N^+(D_2)=D_1$ we have that $D_2 \subset K$ and thus $D_3 \cap K =\emptyset.$ Continuing this way we have that $D_{2i}\subset K$ and $D_{2i+1}\cap K=\emptyset$ for every $1 \leq i \leq m. $ Since we are assuming that $K$ is an up-color kernel for $G$ we have that for every vertex $v\in D_{2i+1}$  there exists $w \in D_{2i}$ such that $(v,w)\in A(G)$ and $c(v)<c(w)$ for every $1 \leq i \leq m. $
 \end{proof}

\begin{figure}[h!]
    \centering
    \includegraphics[scale=.5]{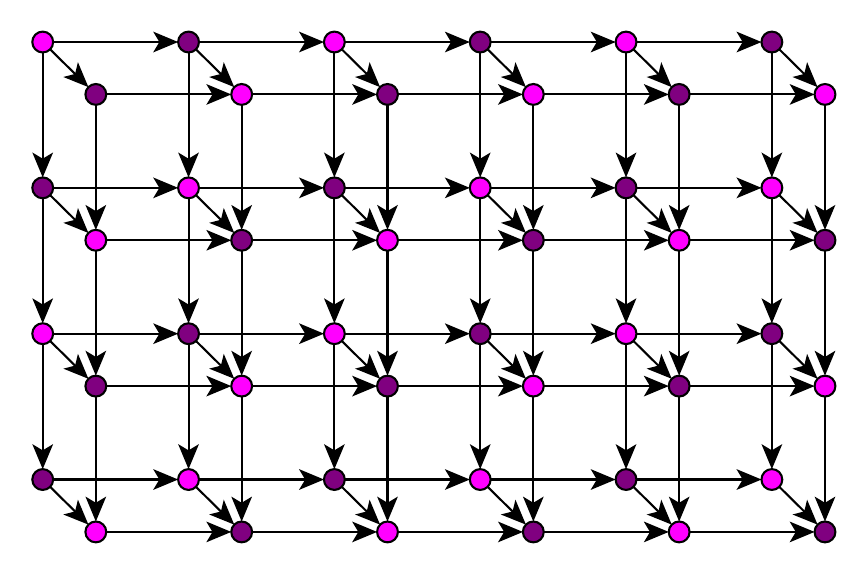}
    \caption{The purple vertices represent an up-color kernel for $G$ if the conditions of Theorem \ref{cubo} hold.}
    \label{cub}
\end{figure}

\begin{teo}\label{fuerte}
    Let $(G,c)$ with $G=\gamma_1 \boxtimes \dots \boxtimes \gamma_n$ be such that each $\gamma_i$ is a directed path for $1 \leq i \leq n$. Denote by $D_0$ the unique vertex in $G$ with $\delta^+(D_0)=0$ and let $D_1=N^-(D_0)$, $D_2=N^-(N^-(D_0))$ and in general $D_i=N_i^-(D_0).$ Denote by $B_i$ a maximal independent set of $D_i.$
    
    Then $G$ has an up-color kernel if and only if for every vertex $v\in D_{2i+1}$  there exists $w \in B_{2i}$ such that $(v,w)\in A(G)$ and $c(v)<c(w)$. 
\end{teo}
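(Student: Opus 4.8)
The plan is to mirror the proof of Theorem \ref{cubo}, the one new ingredient being that in the strong product the level sets $D_i$ are no longer independent, so the subdigraph induced on each $D_i$ must be analyzed.

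First I would record the \emph{level structure}. Writing each vertex as a tuple of path-indices and setting $\ell(v)=\max_j v_j$, one has $D_i=\{v:\ell(v)=i\}$ with $D_0$ the all-zero sink. Since every arc of a strong product of paths decreases each coordinate by $0$ or $1$ (and at least one coordinate by exactly $1$), an arc leaving a vertex of level $k$ lands in level $k$ or $k-1$; moreover the ``full descent'' arc, obtained by decreasing every positive coordinate by one, shows that each $v$ with $\ell(v)\geq 1$ has an out-arc into $D_{\ell(v)-1}$. Two consequences follow: there are no arcs between $D_{2i}$ and $D_{2j}$ for $i\neq j$, since their levels differ by at least $2$; and $D_0\subseteq K$ for every up-color kernel $K$, because $D_0$ has out-degree zero.

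For \textbf{sufficiency}, given the sets $B_{2i}\subseteq D_{2i}$ satisfying the hypothesis, I would put $K=\bigcup_i B_{2i}$. Independence is immediate: inside a fixed $B_{2i}$ there are no arcs by choice, and between distinct even levels there are none by the level observation. For absorption, a vertex of an odd level $D_{2i+1}$ is up-color absorbed into $B_{2i}$ by hypothesis; a vertex $v\in D_{2i}\setminus B_{2i}$ has its out-neighbours in $D_{2i}\cup D_{2i-1}$, and since $D_{2i-1}\cap K=\emptyset$ it must be absorbed \emph{within} its own level. This is exactly the place where the internal structure enters: maximality of $B_{2i}$ together with the internal colours of $D_{2i}$ is needed, so in fact one must require $B_{2i}$ to be an up-color kernel of the induced subdigraph $D_{2i}$, not merely a maximal independent set.

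For \textbf{necessity} I would peel the levels inductively. We have $D_0\subseteq K$; since every vertex of $D_1$ has the full-descent arc into $D_0$, independence forces $D_1\cap K=\emptyset$, and each such vertex, whose only kernel out-neighbour is $D_0$, is up-color absorbed by $D_0$. Assuming $D_{2i-1}\cap K=\emptyset$, set $B_{2i}:=K\cap D_{2i}$; any $v\in D_{2i}\setminus B_{2i}$ is absorbed, and as its out-neighbours lie in $D_{2i}\cup D_{2i-1}$ with the latter disjoint from $K$, it is absorbed inside $D_{2i}$, so $B_{2i}$ is a maximal independent (indeed up-color kernel) set of $D_{2i}$. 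To continue the induction I must show $D_{2i+1}\cap K=\emptyset$: each $u\in D_{2i+1}$ sends an arc into $B_{2i}$ (through the forced internal sinks/corners of $D_{2i}$, which are pushed into $B_{2i}$ by absorption and are reachable from $u$), so $u\in K$ would violate independence. Finally each $u\in D_{2i+1}$ is absorbed by $K$ with its only kernel out-neighbours in $B_{2i}$, yielding $w\in B_{2i}$ with $(u,w)\in A(G)$ and $c(u)<c(w)$, which is the asserted condition.

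The \textbf{main obstacle} is precisely this internal arc structure of the level sets, absent in the Cartesian case of Theorem \ref{cubo} where each $D_i$ is independent and $N^+(D_{2i})=D_{2i-1}$. Here one must (i) prove that an unchosen even-level vertex is genuinely absorbed inside its level, which upgrades $B_{2i}$ from an arbitrary maximal independent set to an up-color kernel of $D_{2i}$, and (ii) show that no odd level can meet $K$, the delicate point being that an odd-level vertex could a priori be independent from $B_{2i}$; ruling this out requires understanding the source/sink structure of the level subdigraphs, which in the strong product are joins of directed paths rather than isolated vertices.
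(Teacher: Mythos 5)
Your diagnosis is essentially correct, and it goes beyond what the paper offers: the paper's entire proof of Theorem \ref{fuerte} is the sentence ``The proof is analogous to the proof of Theorem \ref{cubo},'' which silently ignores exactly the two difficulties you isolate. Your first point is a genuine defect in the statement itself, not just in your argument: since the level sets $D_{2i}$ of a strong product are not independent, a vertex of $D_{2i}\setminus B_{2i}$ has all its out-neighbours in $D_{2i}\cup D_{2i-1}$ and must be up-color absorbed \emph{inside} $D_{2i}$, a requirement the stated condition never imposes. A concrete counterexample on $\gamma_1\boxtimes\gamma_2$ with $\gamma_i$ the path $0\leftarrow 1\leftarrow 2$: the vertices $(2,0)$, $(0,2)$, $(2,2)$ are forced into any up-color kernel, $(2,1)$ is forced out and can only be absorbed by $(2,0)$, so $c(2,1)<c(2,0)$ is necessary (as is $c(2,2)\neq 0$); yet the theorem's condition involves only $D_1$ versus $B_0=\{(0,0)\}$ and can hold while these fail. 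A related defect you implicitly touch on: $B_{2i}$ is allowed to be \emph{any} maximal independent set of $D_{2i}$, but in the example only $\{(2,0),(0,2),(2,2)\}$ can occur as $K\cap D_2$, while $\{(2,1),(1,2)\}$ is also maximal independent; the condition is not invariant under that choice.

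The one soft spot in your own repair is the necessity step ``each $u\in D_{2i+1}$ sends an arc into $B_{2i}$ through forced internal sinks reachable from $u$.'' That is not literally true: for $u=(3,3)$ in a $4\times 4$ strong grid, the internal sinks $(2,0)$ and $(0,2)$ of $D_2$ are not out-neighbours of $u$. What saves the argument is a finer forcing: by induction on the coordinate sum one shows that every vertex all of whose path-coordinates are even is forced into $K$ (all of its out-neighbours have an odd coordinate and are already forced out), and every vertex with an odd coordinate is forced out (it has an arc to the all-even vertex obtained by decrementing its odd coordinates, which is already forced in). This pins $K$ down to the all-even-coordinate set, identifies the correct $B_{2i}$ uniquely, and yields the full list of necessary and sufficient conditions: every vertex with at least one odd coordinate must be up-color absorbed by its unique all-even out-neighbour, and no all-even vertex may have color zero. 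I would recommend restating the theorem in that form rather than patching the $B_{2i}$ formulation.
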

\begin{proof}
    The proof is analogous to the proof of Theorem \ref{cubo}.
\end{proof}

\begin{figure}[h!]
    \centering
    \includegraphics[scale=.5]{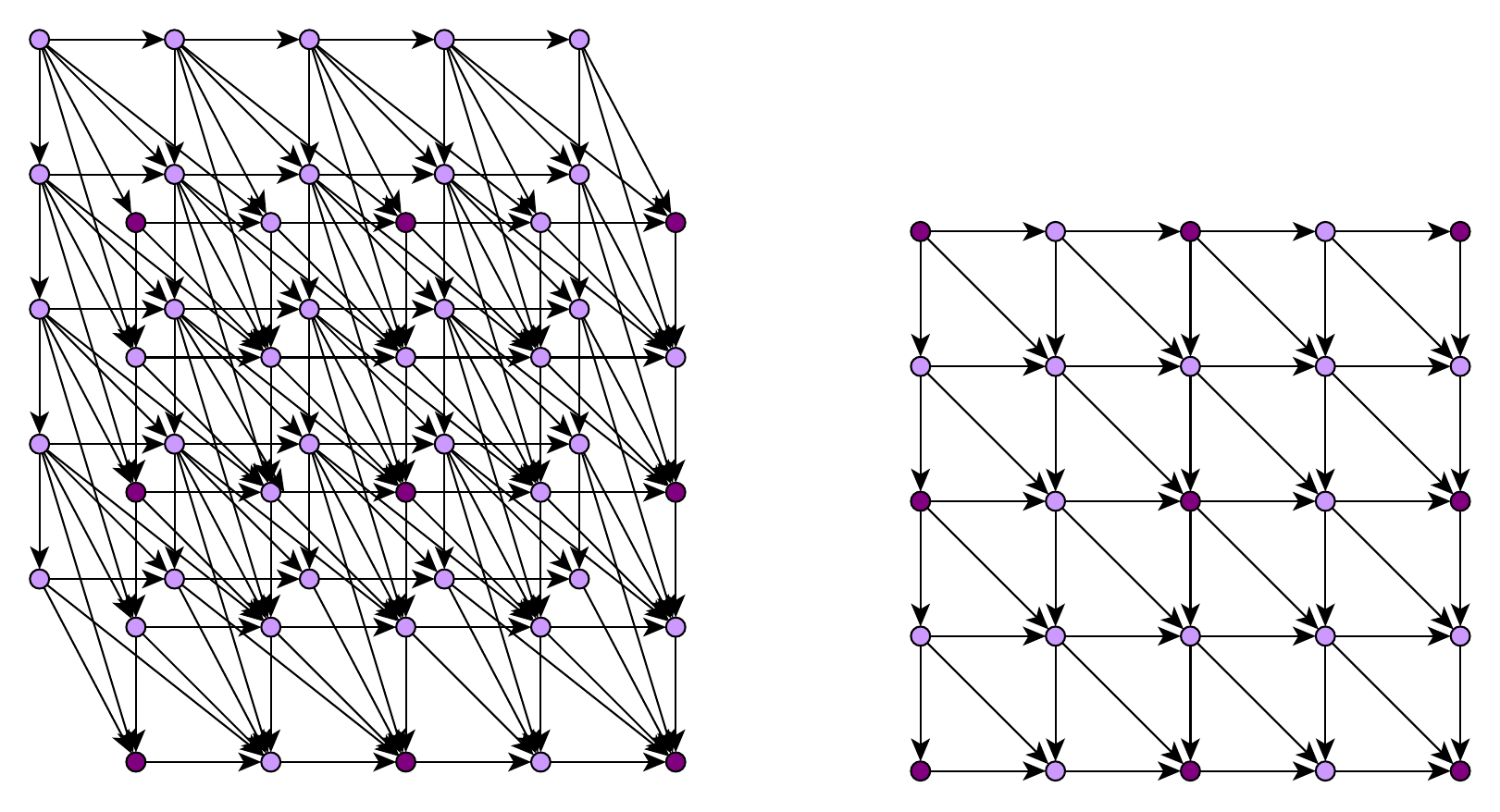}
    \caption{The dark purple vertices represent an up-color kernel for $G$ if the conditions of Theorem \ref{fuerte} hold.}
    \label{cubf}
\end{figure}

Let $S_k^+$  and $S_k^-$ denote the orientation of the star graph $K_{1,k}$ which have one unique vertex with zero in-degree and zero out-degree, respectively.
In what follows we will assume that the central vertex of any star is labeled with the number zero and the remaining vertices are labeled with the numbers $1,2,\dots$

\begin{teo}\label{yx}
    Let $G=S_{m_1}^-\square \cdots \square S_{m_p}^-\square S_{n_1}^+\square \cdots \square S_{n_q}^+$ be a colored digraph. Define a function $f:V(G)\rightarrow \mathbb{N}\times \mathbb{N}$ as $f(v)=(y,x)$, if there are $y$ non-zero entries among the first $p$ entries of $v$ and $x$ zero entries among the last $q$ entries of $v.$ Then $G$ has an up-color kernel if and only if the set $I=\{v\in V(G): x+y= 2k \text{ for some }k\in \mathbb{Z} \text{ where } f(v)=(x,y) \}$ is up-color absorbent.
\end{teo}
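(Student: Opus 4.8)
The plan is to show that $I$ is itself the unique candidate for an up-color kernel, following the scheme of Theorem~\ref{cubo}. The crucial structural observation is that the quantity $\lambda(v):=x+y$, where $f(v)=(x,y)$, plays the role of the level function $D_i$ of Theorem~\ref{cubo}. First I would check that each factor contributes one unit to $\lambda$ exactly when its coordinate sits one arc above a local sink: in an $S^-_{m_i}$ factor the central sink carries label $0$, so a non-zero (leaf) entry sits one arc above it; in an $S^+_{n_j}$ factor the leaves are the sinks, so a zero (central) entry sits one arc above them. Since an arc of a Cartesian product changes exactly one coordinate along an arc of the corresponding factor, and the arcs of $S^-$ point from a leaf to the centre while those of $S^+$ point from the centre to a leaf, every arc of $G$ lowers exactly one coordinate from its ``active'' value to its sink value. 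Hence every arc of $G$ decreases $\lambda$ by exactly $1$.

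Everything then follows from this single fact. Because each arc flips the parity of $\lambda$, no arc joins two vertices of equal parity, so $I=\{v:\lambda(v)\text{ even}\}$ is independent; for the ``if'' direction it then suffices to recall that an independent up-color absorbent set is, by Definition~\ref{upkernel} and the definition of kernel, an up-color kernel. Moreover the sinks of $G$ are exactly the vertices with $\lambda=0$, since a product vertex has out-degree zero iff each of its coordinates is a sink in its factor.

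For the ``only if'' direction I would prove by induction on $\ell=\lambda$ that every up-color kernel $K$ satisfies $K=I$. In the base case each vertex $s$ with $\lambda(s)=0$ has no out-arc, so the first condition of Definition~\ref{upkernel} forces $s\in K$, in agreement with $I$ since $0$ is even. For the inductive step assume the statement holds for all levels below $\ell$. If $\ell$ is even and some $v$ with $\lambda(v)=\ell$ were not in $K$, the first condition would supply $w\in K$ with $(v,w)\in A(G)$, but any such $w$ has $\lambda(w)=\ell-1$ odd, contradicting that odd levels avoid $K$; hence level $\ell$ lies in $K$. If $\ell$ is odd, every $v$ with $\lambda(v)=\ell$ is not a sink and so has an out-neighbour $w$ with $\lambda(w)=\ell-1$ even, which lies in $K$ by induction, whence independence of $K$ forbids $v\in K$. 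Thus $K=I$, and since $K$ is up-color absorbent, so is $I$.

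The main obstacle is the bookkeeping needed to confirm that each arc lowers $\lambda$ by exactly one: one has to keep straight the opposite orientations of the $S^-$ and $S^+$ factors, the convention that centres carry label $0$, and the definition of $f$ (the swap between $(y,x)$ and $(x,y)$ in the statement is harmless, since only $x+y$ enters, and the color-zero clause is automatically respected because level-$0$ sinks lie in $I$). Once the level function and its unit-decrease property are established, the independence of $I$ and the level-by-level induction are immediate and completely parallel to Theorem~\ref{cubo}.
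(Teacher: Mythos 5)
Your proposal is correct and follows essentially the same route as the paper's proof: the paper partitions $V(G)$ into the sets $V_{i,j}=f^{-1}(i,j)$, observes that every arc goes from $V_{i,j}$ to $V_{i-1,j}$ or $V_{i,j-1}$ (so the parity of $i+j$ flips along every arc, $V_{0,0}$ is the set of sinks, and $I$ is independent), and then propagates the forced membership level by level to conclude that any up-color kernel must equal $I$. Your collapsing of the pair $(i,j)$ into the single level function $\lambda=x+y$ is only a cosmetic repackaging of the same argument.
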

\begin{proof}
Define the sets $V_{i,j}=\{v\in V(G): f(v)=(i,j)\}$.  Every arc in $G$  goes from $V_{i,j}$ to $V_{i-1,j}$ or from $V_{i,j}$ to  $V_{i,j-1}$ for some $1\leq i\leq p$ and some $1\leq j \leq q.$ Each vertex in $V_{i,j}$ has at least one out-neighbor in $V_{i-1,j}$ and at least one out-neighbor in $V_{i,j-1}$. This means that the set $V_{0,0}$ consists precisely of the vertices having zero out-degree. 

Thus, if $I=\bigcup_{i+j=2k}V_{i,j}$ is an up-color absorbent set, it is an up-color kernel for $G$ since it is an independent set.

Now assume that $G$ has an up-color kernel $K.$ Then $K$ contains the set $V_{0,0}$, and $K$ cannot contain vertices of the set $V_{1,0}\cup V_{1,0}$. This means that $K$ must contain every vertex of $V_{0,2}\cup V_{2,0}\cup V_{1,1}$ and $V_{0,2}\cup V_{2,0}\cup V_{1,1}$ must up-color absorb the set $V_{1,0}\cup V_{1,0}$. This implies that $K$
 cannot contain any vertex of the set $V_{1,2}\cup V_{2,1}.$ Continuing this way, we obtain that $K$ must contain every vertex of $I,$ cannot contain any vertex of $V(G)\setminus I$ and hence $K=I$ is an up-color absorbing set.

\end{proof}

\begin{figure}[h!]
    \centering
    \includegraphics[scale=.4]{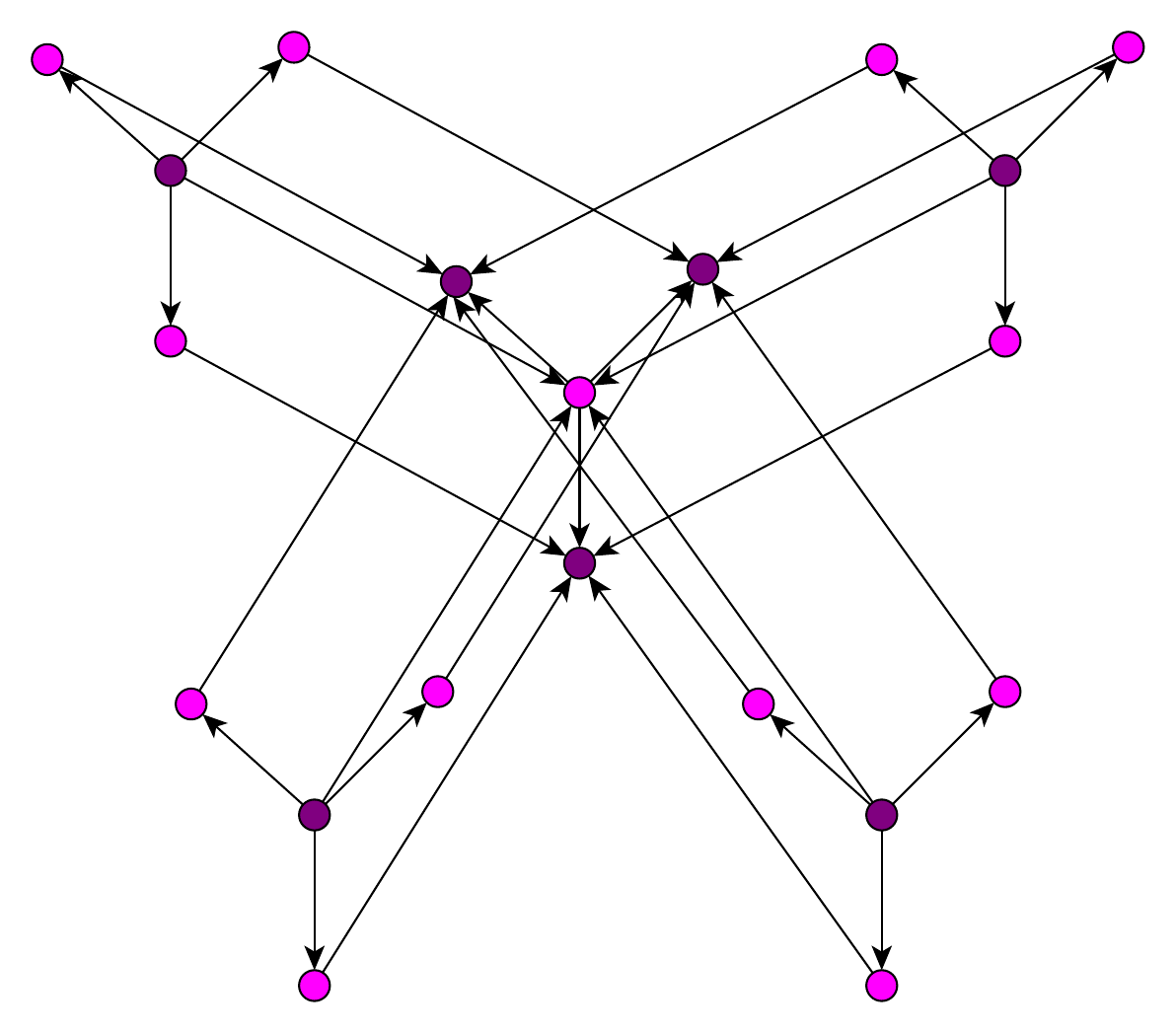}
    \caption{The purple vertices form an up-color kernel for $S_3^+\square S_4^-$ if the conditions of Theorem \ref{yx} hold.}
    \label{YX}
\end{figure}

\begin{teo}
   Let $G=S_{m_1}^-\boxtimes \cdots \boxtimes
   S_{m_p}^-\boxtimes S_{n_1}^+\boxtimes \cdots \boxtimes S_{n_q}^+$ be a colored digraph. Then $G$ has an up-color kernel if and only if the set $\{(0,\dots,0, x_1, \dots,x_q)\in V(G): x_i\neq 0\}=\{v\in V(G): d^+(v)=0\}$ is an up-color absorbing set.
\end{teo}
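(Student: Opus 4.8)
The plan is to show that the sink set $T=\{(0,\dots,0,x_1,\dots,x_q)\in V(G): x_j\neq 0\}$ is the \emph{only} candidate for an up-color kernel, so that the existence question collapses to whether $T$ itself is up-color absorbing. First I would verify the identity $T=\{v\in V(G): d^+(v)=0\}$ by inspecting the factors: in $S_{m_i}^-$ the unique vertex with no out-going arc is the central vertex (label $0$), whereas in $S_{n_j}^+$ the vertices with no out-going arc are exactly the leaves (labels $\neq 0$). In a strong product a vertex has out-degree zero precisely when it admits no out-going factor arc in \emph{every} coordinate, so a vertex is a sink iff its first $p$ entries are all $0$ and its last $q$ entries are all nonzero, which is exactly $T$.

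Next I would record two immediate facts. The set $T$ is independent, since an arc joining two out-degree-zero vertices would endow one of them with positive out-degree. Moreover every sink must lie in any up-color kernel: a sink has no out-going arc, hence cannot be up-color absorbed, so it must belong to the absorbent set; by condition (2) of an up-color absorbent set this also forces every sink to have nonzero color.

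The heart of the argument is to show that every non-sink vertex has an arc \emph{directly} into $T$. Given $v=(a_1,\dots,a_p,b_1,\dots,b_q)\notin T$, I would build the target $t\in T$ coordinate by coordinate: set the first $p$ entries of $t$ to $0$, and for each of the last $q$ entries put $t_j=b_j$ if $b_j\neq 0$ and $t_j=1$ (any leaf) if $b_j=0$. In each of the first $p$ coordinates either $a_i=0$ (equal) or $a_i\neq 0$, in which case $(a_i,0)$ is the leaf-to-center arc of $S_{m_i}^-$; in each of the last $q$ coordinates either $b_j=t_j$ (equal) or $b_j=0$, in which case $(0,t_j)$ is a center-to-leaf arc of $S_{n_j}^+$. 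Thus in every coordinate $v$ and $t$ either agree or are joined by a factor arc, and since $v\neq t$ at least one coordinate genuinely advances; by the definition of the strong product this produces an arc $(v,t)\in A(G)$. I expect this to be the main obstacle, because it is exactly the point where the strong product diverges from the Cartesian case of Theorem \ref{yx}: advancing all coordinates simultaneously lets a vertex reach a sink in a single step, which is what collapses the kernel to $T$ rather than to a union of alternating levels.

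Finally I would assemble the equivalence. For the backward direction, if $T$ is up-color absorbing then, being independent, it is an up-color kernel. For the forward direction, suppose $K$ is an up-color kernel; by the second fact $T\subseteq K$, and by the absorbency skeleton just constructed any vertex $u\notin T$ has an arc into $T\subseteq K$, so $u\in K$ would contradict the independence of $K$. Hence $K=T$, and in particular $T$ is up-color absorbing, which closes both implications.
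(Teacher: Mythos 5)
Your proposal is correct and follows essentially the same route as the paper: the paper's (much terser) proof simply asserts that the sink set is independent and absorbent for any coloring, hence a kernel, and concludes it is an up-color kernel iff it is up-color absorbent. You supply the details the paper omits — the explicit coordinate-wise arc from any non-sink into the sink set (which is exactly where the strong product is used) and the resulting uniqueness of the candidate kernel — but the underlying idea is identical.
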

\begin{proof}
    Notice that, for any coloring of $G$, the set $\{v\in V(G): d^+(v)=0\}$ is a kernel for $G$ since it is independent and absorbent. Thus $\{v\in V(G): d^+(v)=0\}$ is an up-color kernel if and only if it is up-color absorbent.
\end{proof}

Denote by $\overrightarrow{K}_{m,n}$ the orientation of the complete bipartite graph where every arc is oriented from a vertex in the set of $m$ vertices to a vertex in the set of $n$ vertices.

\begin{teo}\label{starpathin}
     Let $G=P_k\square \overrightarrow{K}_{m,n}$ with $V(\overrightarrow{K}_{m,n})=X\cup Y$ with $|X|=m $, $|Y|=n$, 
      $V(P_k)=\{1, \dots, k\}$ and $A(P_k)=\{(i,i-1)\}_{i=2}^k.$ Then $G$ has an up-color kernel if and only if for every $x\in\{x_1, \dots, x_m\}$, $c(2i,x)<c(2i-1,x)$ and for every $y \in \{y_1, \dots y_n\}$, either $c(2i+1,y)<c(2i,y)$, or $c(2i-1,y)< c(2i-1,x)$ for some $x \in  \{x_1, \dots, x_m\}$ for $1\leq i \leq \frac{k}{2}$. 
\end{teo}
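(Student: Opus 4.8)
The plan is to mimic the graded-level argument of Theorem~\ref{cubo}. First I would check that $G=P_k\square\overrightarrow{K}_{m,n}$ is acyclic: along every arc the $P_k$-coordinate either decreases by one (a path-arc) or stays fixed while the $\overrightarrow{K}_{m,n}$-coordinate crosses from one colour class to the other (a copy-arc), and neither step can be undone. Writing $D_0$ for the set of out-degree-zero vertices (these are exactly the vertices $(1,a)$ sitting over the sinks $a$ of $\overrightarrow{K}_{m,n}$, at the bottom of the path) and $D_i=N_i^-(D_0)$, a short induction shows that each $D_i$ is independent, that every arc of $G$ runs from some $D_i$ to $D_{i-1}$, and that the $D_i$ partition $V(G)$. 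Hence, exactly as in Theorem~\ref{cubo}, the forced inclusions $D_0\subset K$, $D_1\cap K=\varnothing$, $D_2\subset K,\dots$ leave $K=D_0\cup D_2\cup D_4\cup\cdots$ as the unique candidate for an up-color kernel, and this $K$ is automatically independent. So the whole statement reduces to deciding when this single set $K$ is up-color absorbent.

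Next I would test absorbency vertex by vertex on $V(G)\setminus K=D_1\cup D_3\cup\cdots$, splitting the complement according to the two column types of $\overrightarrow{K}_{m,n}$. The operative fact is that a vertex in an $x$-column has no out-neighbour inside its own copy of $\overrightarrow{K}_{m,n}$, so its only out-neighbour is the vertex directly below it in its path-column; such a vertex is absorbed precisely when $c(2i,x)<c(2i-1,x)$, which produces the first family of inequalities. A vertex in a $y$-column, on the other hand, has both a path out-neighbour below it and cross out-neighbours at its own path-level, so it is absorbed as soon as $c(2i+1,y)<c(2i,y)$ (absorption down the path) \emph{or} $c(2i-1,y)<c(2i-1,x)$ for some $x$ (absorption across the copy); this is a disjunction of the form displayed in the statement. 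Collecting the two families over $1\le i\le k/2$ yields the announced condition, and conversely each inequality is forced, since it is the only route by which the corresponding complement vertex can be absorbed. Because $K$ is the unique candidate, this argument proves necessity and sufficiency simultaneously.

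The bookkeeping near the two ends of the path is where I expect the real work to be. One has to line up the parity of the path-coordinate with membership in $K$ so that the indices $2i$, $2i-1$ and $2i+1$ name the correct vertices, treat the boundary levels $1$ and $k$ (where either the path out-neighbour or the cross out-neighbours may be absent), and fix the range $1\le i\le k/2$ according to the parity of $k$. A final point to verify is that $K$ carries no vertex of colour zero: at the interior sinks this is already forced by the strict inequalities just derived, and for the remaining kernel vertices it follows from the standing colouring convention, so it adds no separate hypothesis. The genuinely delicate step is the treatment of the cross arcs, since it is exactly their presence that turns the plain inequalities on the $x$-columns into the disjunction on the $y$-columns, and aligning the two clauses of that disjunction (indexed at $2i+1$ and $2i-1$) with the same range of $i$ is the part that demands care.
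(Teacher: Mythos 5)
Your proposal is correct and follows essentially the same route as the paper: your forced alternation $D_0\subset K$, $D_1\cap K=\emptyset$, $D_2\subset K,\dots$ borrowed from Theorem~\ref{cubo} is exactly the paper's ``continuing this way'' induction, and your column-by-column absorbency check yields the same two families of inequalities. The only point worth flagging is that, like the paper's own proof, you implicitly orient $\overrightarrow{K}_{m,n}$ from $Y$ to $X$ (so that the vertices $(1,x)$ are the sinks of $G$); this is the reading forced by the stated conditions, even though it reverses the paper's displayed definition of $\overrightarrow{K}_{m,n}$.
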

\begin{proof}
    Assume the conditions of the theorem hold. Then the set $\{(2i-1,x): 1\leq i \leq \frac{k}{2}, x\in X\}\cup\{(2i,y):1\leq i \leq \frac{k}{2}, y\in Y\}$ is an up-color kernel for $G.$

    Now assume that $G$ has an up-color kernel $K.$ Notice that since $d^+((1,x)=0$ we have $(1,x)\in K$ for every $x\in X$. This means that $(2,x), (1,y)\notin K$ for $ y\in Y$ and hence $(2,x)$ must be up-color absorbed by $(1,x)$, and $(1,y)$ must be up-color absorbed by $(1,x)$ for some $x\in X.$ We also have that $(2,y), (3,x)\in K$ since $N^+((2,y))=\{(1,y)\}\cup \bigcup_{x\in X} \{(2,x)\}$ and $N^+((3,x))=\{(2,x)\}.$ This means that $(3,y)\notin K$ and thus either  $c(3,y)<c(2,y)$ or $ c(3,y)< c(3,x)$  for some $x\in X.$ Continuing this way we obtain that $(2i,y), (2i-1,x)\in K$ for  $1\leq i \leq \frac{k}{2}$, thus $(2i-1,y),(2i,x)\notin K$ and hence the vertices $(2i-1,y),(2i,x)$ must be up-color absorbed for every $x\in X, y\in Y.$
\end{proof}

\begin{figure}[h!]
    \centering
    \includegraphics[scale=.6]{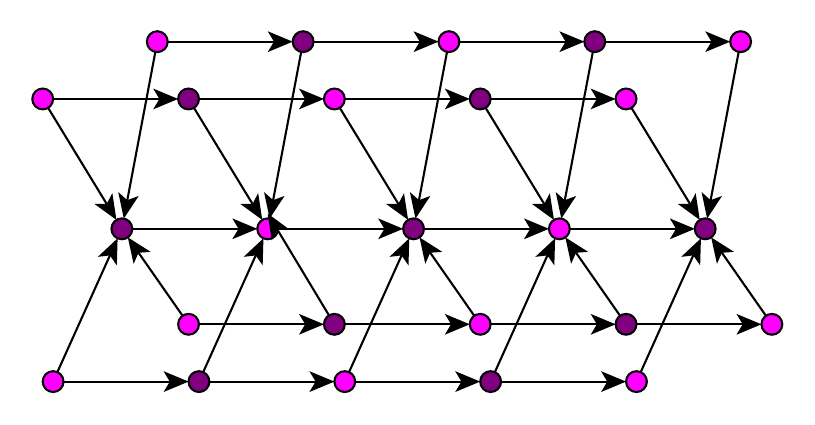}
    \caption{The graph $P_5\square S_4^-$, where the purple vertices are an up-color kernel if the conditions of Theorem \ref{starpathin} hold.}
    \label{S4P5}
\end{figure}

\begin{prop}\label{prodcic} 
    Let $D=C_{2k}\square C_{2m}$ and let $V(C_{2k})=\{x_1, \dots, x_{2k}\}$ and $V(C_{2m})=\{y_1, \dots, y_{2m}\}$. Then $D$ has an up-color kernel if and only if one of the following holds:
    \begin{itemize}
    \item  Every vertex $(x_{2i}, y_{2j})$ with $ 1 \leq i \leq k $, $1 \leq j \leq m$ and $(x_{2i+1}, y_{2j+1})$ with $ 1 \leq i \leq k-1 $, $1 \leq j \leq m-1$ has an out-neighbor with greater color or, 
    \item  Every vertex $(x_{2i+1}, y_{2j})$ with $ 1 \leq i \leq k -1$, $1 \leq j \leq m$ and $(x_{2i}, y_{2j+1})$ with $ 1 \leq i \leq k $, $1 \leq j \leq m-1$ has an out-neighbor with greater color.
        \end{itemize}
\end{prop}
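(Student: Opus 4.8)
The plan is to mimic the strategy of Theorem~\ref{cubo} and of the corollary on even directed cycles: show that the only candidates for an up-color kernel of $D=C_{2k}\square C_{2m}$ are the two colour classes of its natural bipartition, and then read off the two displayed conditions as the up-color absorbency of each class. Writing a vertex as $(x_a,y_b)$ with the orientation inherited from the two directed cycles, its out-neighbours are exactly $(x_{a-1},y_b)$ and $(x_a,y_{b-1})$, so every arc flips the parity of $a+b$. Hence the even class $E=\{(x_a,y_b): a+b \text{ even}\}$ and the odd class $O=\{(x_a,y_b): a+b\text{ odd}\}$ are both independent, and because the two out-neighbours of any vertex always lie in the opposite class, each of $E$ and $O$ is absorbent. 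I would first check that the first itemized condition is exactly the statement that every vertex of $E$ has a strictly larger-colored out-neighbour (equivalently, that $O$ is up-color absorbent) and that the second is the symmetric statement for $O$; the vertices listed with both indices even or both odd are precisely those of $E$, and the mixed-parity ones are precisely those of $O$.

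Sufficiency is then routine: under the first condition $O$ is independent, up-color absorbent by hypothesis, and avoids color zero (which must be read into the hypothesis, since up-color absorbency forbids color zero in the kernel), so $O$ is an up-color kernel, and symmetrically $E$ works under the second condition. For necessity, suppose $D$ has an up-color kernel $K$; the aim is to force $K=E$ or $K=O$, after which the required color inequalities are just the up-color absorbency of $K$. Two observations drive this. First, every kernel is a maximal independent set: if $u\notin K$ could be added, absorbency gives an out-neighbour of $u$ in $K$, contradicting independence of $K\cup\{u\}$. Second, and the key forcing step, if $(x_a,y_b)\in K$ then $(x_{a+1},y_{b+1})\in K$, because the two out-neighbours of $(x_{a+1},y_{b+1})$, namely $(x_a,y_{b+1})$ and $(x_{a+1},y_b)$, are in-neighbours of $(x_a,y_b)$ and hence lie outside $K$, so this vertex can only be absorbed by belonging to $K$. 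Iterating, $K$ is a union of full $(1,1)$-diagonals, each of constant $a+b$-parity.

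The hard part will be upgrading this to $K=E$ or $K=O$. Since the torus has no vertex of out-degree zero, the layer-peeling that anchored Theorem~\ref{cubo} at $D_0$ is unavailable, and the diagonal-closure step alone only yields that $K$ is a union of diagonals. Maximal independence rules out $K$ being a proper \emph{same-parity} subfamily (as $E$ and $O$ are independent supersets of it), so the genuinely delicate point, and what I expect to be the main obstacle, is to exclude kernels that combine diagonals of \emph{both} parities: such mixed unions of non-adjacent diagonals can themselves be independent and absorbent in the underlying digraph, so ruling them out, or understanding when their existence still forces one of the two displayed conditions, is the crux of the necessity argument. I would attack it by analysing how up-color absorbency propagates between adjacent diagonals, aiming to show that the presence of any diagonal of $K$ forces the whole parity class simultaneously, and I would expect this step to need a careful use of the global cycle structure together with the colouring.
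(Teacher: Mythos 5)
Your sufficiency argument and your diagonal-forcing step coincide with the paper's: from $(x_a,y_b)\in K$ the two out-neighbours of the preceding diagonal vertex are in-neighbours of $(x_a,y_b)$, hence excluded from $K$ by independence, so that vertex is forced into $K$; iterating around the torus shows $K$ is a union of full diagonals $\{(x_{a+t},y_{b+t})\}_t$. The genuine gap in your proposal is that you stop exactly there: you observe that a single diagonal has only $\mathrm{lcm}(2k,2m)$ vertices, that $K$ could a priori be any union of pairwise non-adjacent diagonals, and that excluding mixed-parity unions is the crux --- but you do not resolve it, so your necessity direction is incomplete. You should know, however, that the paper's proof does not resolve it either: it asserts that ``continuing this way'' the diagonal through $(x_1,y_1)$ exhausts an entire parity class, which is literally true only when $\gcd(k,m)=1$, i.e.\ when each parity class is a single diagonal.

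Moreover, the obstacle you isolate is not an artifact of your approach. Index the diagonals by $d=a-b \bmod 2\gcd(k,m)$; arcs go from diagonal $d$ to diagonals $d\pm1$, so a kernel that is a union of full diagonals corresponds to an independent dominating set of a cycle on $2\gcd(k,m)$ indices. For $\gcd(k,m)\le 2$ the only such sets are the two parity classes, and your program closes with this one extra observation (this is the short argument the paper's ``continuing this way'' is standing in for). But for $\gcd(k,m)\ge 3$ there are others: in $C_6\square C_6$ the union of the diagonals $d=0$ and $d=3$ is independent and absorbent, and colouring its twelve vertices with $2$ and all remaining vertices with $1$ produces an up-color kernel although neither displayed condition holds, since a vertex such as $(x_2,y_2)$ has both out-neighbours of smaller colour. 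So no propagation argument can force a full parity class there; the step you flagged as ``the main obstacle'' is a genuine gap in the necessity direction, in the paper's proof as well as in yours, and closing it requires either restricting to $\gcd(k,m)\le 2$ or reformulating the conditions in terms of unions of non-adjacent diagonals.
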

\begin{proof}
    Assume one of the items holds. Then either $$\{(x_{2i+1}, y_{2j+1}):  1 \leq i \leq k-1,1 \leq j \leq m-1\} \cup\{(x_{2i}, y_{2j}):  1 \leq i \leq k,1 \leq j \leq m\}\text{ or}$$  $$\{(x_{2i}, y_{2j+1}):  1 \leq i \leq k,1 \leq j \leq m-1\} \cup\{(x_{2i+1}, y_{2j}):  1 \leq i \leq k-1,1 \leq j \leq m\}$$ is an up-color kernel for $D.$
    Now assume that $D$ has an up-color kernel $K.$ If $(x_1,y_1)\in K$ then $(x_1, y_{2m}), (x_{2k},y_1)\notin K$ thus 
    $(x_{2k}, y_{2m})\in K.$ Continuing this way we have that $$(x_{2k}-i(\text{mod }2k ), y_{2m}-i(\text{mod } 2m)) 
    \in K$$ for $ 1\leq i \leq 2km-2$, in other words, $K$ contains the set $\{(x_{2i+1}, y_{2j+1}):  1 \leq i \leq k-1,1 
    \leq j \leq m-1\} \cup\{(x_{2i}, y_{2j}):  1 \leq i \leq k,1 \leq j \leq m\}$, thus $K$ cannot contain the set $
    \{(x_{2i}, y_{2j+1}):  1 \leq i \leq k,1 \leq j \leq m-1\} \cup\{(x_{2i+1}, y_{2j}):  1 \leq i \leq k-1,1 \leq j \leq 
    m\}$ and hence the vertices $(x_{2i+1}, y_{2j})$ and $(x_{2i}, y_{2j+1})$ have an out-neighbor with greater color 
    for every $ 1 \leq i \leq k$ and every $1 \leq j \leq m$ thus the second item holds. If we were to assume that $K$ 
    does not contain the vertex $(x_1, y_1)$ but the vertex $(x_1, y_2)$ or $(x_2,y_1)$, we can analogously proof that the first item holds.
\end{proof}

\begin{figure}[h!]
    \centering
    \includegraphics[scale=.4]{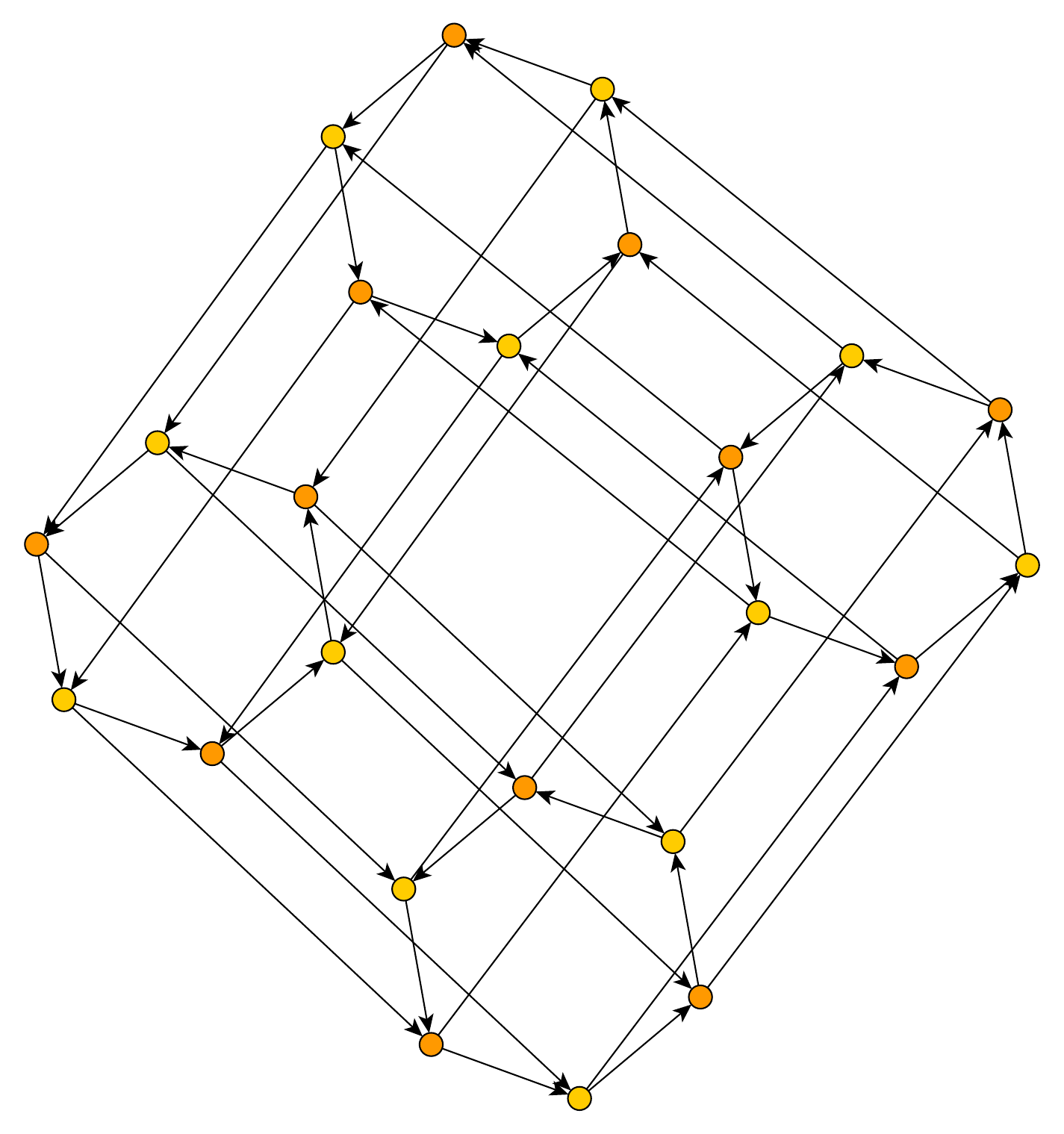}
    \caption{The orange or the yellow vertices form an up-color kernel for $C_4\square C_6$ if the conditions of Proposition \ref{prodcic} hold.}
    \label{C4sqC6}
\end{figure}

\subsection{Zykov sum}\label{ZykovS}

\begin{defi} 
    Let $G$ be a connected digraph and let $\{\mathcal{H}_v\}_{v\in V(G)}$ denote a family of pairwise disjoint digraphs. The Zykov sum of $G$ and $\mathcal{H}$, denoted by $G[\mathcal{H}]$, is the digraph obtained from $G$ by replacing every vertex $v$ of $G$ with the digraph $H_v$ and there is an arc from every vertex of $H_u$ to any vertex of  $H_v$ if $(u,v)\in A(G).$ 
\end{defi}

\begin{prop}
    Let $P_n$ be the directed path graph with $V(P_n)=\{1, \dots, n\}$ and arcs $(i+1,i)$ for $ 1\leq i \leq n-1$. Let $\mathcal{H}=\{G_i\}_{i=1}^n$ be a family of $n$ pairwise disjoint digraphs. Then the Zykov sum $P_n(\mathcal{H})$ has an up-color kernel if and only if every digraph $G_{2i-1}$ has an up-color kernel $K_{2i-1}$ which up-color-absorbs every vertex of $G_{2i}$ for $ 1\leq i \leq \frac{n}{2}.$
\end{prop}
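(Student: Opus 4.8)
The plan is to exploit the layered structure of $P_n[\mathcal{H}]$: the only arcs between distinct copies run from $G_{i+1}$ to $G_i$, and whenever such an arc exists it is present between \emph{every} pair of vertices of the two copies, while inside each $G_j$ we keep its original arcs. For a candidate up-color kernel $K$ I write $K_j := K \cap V(G_j)$, and the whole argument is organized around localizing vertices and their out-neighbourhoods to the correct copies.

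For the sufficiency direction, suppose each odd copy $G_{2i-1}$ has an up-color kernel $K_{2i-1}$ that up-color absorbs all of $G_{2i}$. I would set $K = \bigcup_i K_{2i-1}$ and verify the two defining properties directly. Independence is immediate: within a single $K_{2i-1}$ it holds by hypothesis, and two distinct odd copies have indices differing by at least $2$, so there are no inter-copy arcs joining them. For up-color absorbency, a vertex of an odd copy not lying in $K$ is absorbed inside its own copy (since $K_{2i-1}$ is an up-color kernel of $G_{2i-1}$), while a vertex of an even copy $G_{2i}$ is absorbed by $K_{2i-1}$ using the complete set of arcs from $G_{2i}$ to $G_{2i-1}$ together with the color inequality guaranteed by the hypothesis; and no vertex of $K$ has color zero because each $K_{2i-1}$ is up-color absorbent.

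The necessity direction is the crux, and I would argue by an induction that peels copies off from the sink $G_1$ upward, alternating two observations. First, since vertex $1$ is the sink of $P_n$, no vertex of $G_1$ has an out-arc leaving $G_1$; hence $K_1$ must itself be an up-color kernel of $G_1$, and in particular $K_1 \neq \emptyset$. The inductive step then splits by parity. If $K_{2i-1} \neq \emptyset$, then because every vertex of $G_{2i}$ has an arc to every vertex of $G_{2i-1}$, no vertex of $G_{2i}$ can lie in the independent set $K$; thus $K_{2i} = \emptyset$, and since the out-neighbours of any $v \in G_{2i}$ lie only in $V(G_{2i-1}) \cup V(G_{2i})$, the vertex of $K$ absorbing $v$ must lie in $K_{2i-1}$, so $K_{2i-1}$ up-color absorbs all of $G_{2i}$. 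Conversely, if $K_{2i} = \emptyset$, then the out-neighbours of any $v \in G_{2i+1}$ lie only in $V(G_{2i})\cup V(G_{2i+1})$, forcing $v$ to be absorbed inside $G_{2i+1}$; together with independence this makes $K_{2i+1}$ an up-color kernel of $G_{2i+1}$, again nonempty. Iterating produces exactly the stated conditions.

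The main obstacle I anticipate is the bookkeeping of nonemptiness and the localization of absorbers: the induction hinges on the fact that an up-color kernel of a (nonempty) copy cannot be empty, which is precisely what lets the complete bipartite arcs between consecutive copies force the alternating pattern $K_{\text{even}} = \emptyset$ and $K_{\text{odd}}$ equal to an up-color kernel. I would also treat the endpoint with care: when $n$ is odd the top copy $G_n$ has no copy above it, so the condition on it reduces to merely possessing an up-color kernel (the absorbency clause being vacuous), whereas when $n$ is even the top even copy $G_n$ is absorbed by $K_{n-1}$, matching the index range $1 \le i \le \tfrac{n}{2}$.
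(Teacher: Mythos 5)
Your argument is correct and follows essentially the same route as the paper's proof: the sufficiency direction takes $K=\bigcup_i K_{2i-1}$ and checks independence and absorbency directly, and the necessity direction peels copies off from the sink $G_1$ upward, using the complete arcs between consecutive copies to force the alternation $K_{2i}=\emptyset$, $K_{2i+1}$ an up-color kernel. Your explicit bookkeeping of nonemptiness and of the top copy when $n$ is odd is a welcome refinement of the paper's terser ``continuing this way,'' but it is not a different method.
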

\begin{proof}
    Assume first that $G(\mathcal{H})$ has an up-color kernel $K$.  Then, $K\cap V(G_1)=K_1$ is an up-color kernel for $G_1.$ Since for every vertex $v\in V(G_2)$ there exists an arc $(v,w)$ for every $w\in V(G_1)$, every vertex in $V(G_2)$ must be up-color absorbed by some vertex in $K_1.$ Thus $K\cap V(G_2)=\emptyset$ and hence $K\cap V(G_3)=K_3$ must be an up-color kernel for $G_3.$ Since for every vertex $v\in V(G_4)$ there exists an arc $(v,w)$ for every $w\in V(G_3)$, every vertex in $V(G_4)$ must be up-color absorbed by some vertex in $K_3.$ Continuing this way, we have that every digraph $G_{2i-1}$ has an up-color kernel $K_{2_i-1}$ which up-color-absorbs every vertex of $G_{2i}$ for $ 1\leq i \leq \frac{n}{2}.$

    Now assume that every digraph $G_{2i-1}$ has an up-color kernel $K_{2_i-1}$ which up-color-absorbs every vertex of $G_{2i}$ for $ 1\leq i \leq \frac{n}{2}.$ Then clearly $K_1 \cup K_3 \cup \dots \cup K_{m} $ is an up-color kernel for $P_n(\mathcal{H})$ for $m = n$ of $n$ is odd and $m=n-1$ if $n$ is even.
\end{proof}

\begin{coro}
    Let $C_{2n}$ be the directed cycle graph with $V(C_{2n})=\{1, \dots, 2n\}$, and let $\mathcal{H}=\{G_i\}_{i=1}^{2n}$ be a family of $n$ pairwise disjoint digraphs. Then the Zykov sum $C_{2n}(\mathcal{H})$ has an up-color kernel if and only if every digraph $G_{2i-1}$ has an up-color kernel which up-color absorbs every vertex of $G_{2i}$, or every digraph $G_{2i}$ has an up-color kernel which absorbs every vertex of $G_{2i+1}.$
\end{coro}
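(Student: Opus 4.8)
The plan is to mirror the proof of the preceding proposition for $P_n(\mathcal{H})$, the only genuinely new ingredient being a parity argument that exploits the fact that $C_{2n}$ has even length. Throughout I write the arcs of $C_{2n}$ as $(i,i-1)$ with indices taken modulo $2n$ (the cyclic analogue of the path convention), so that in $C_{2n}(\mathcal{H})$ each vertex of $G_i$ has, as its out-neighbours, its internal out-neighbours inside $G_i$ together with the whole of $V(G_{i-1})$.

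First I would dispatch the reverse direction, which is the easy half. Assuming (say) the first condition, that each $G_{2i-1}$ has an up-color kernel $K_{2i-1}$ which up-color-absorbs every vertex of $G_{2i}$, I claim $K=\bigcup_i K_{2i-1}$ is an up-color kernel of $C_{2n}(\mathcal{H})$. Independence holds because each $K_{2i-1}$ is independent in $G_{2i-1}$ and because distinct odd-indexed blocks are pairwise non-adjacent in $C_{2n}$ (odd indices are never consecutive in an even cycle), so no arc of $C_{2n}(\mathcal{H})$ joins two blocks meeting $K$. For absorbency, a non-kernel vertex of an odd block $G_{2i-1}$ is up-color-absorbed internally since $K_{2i-1}$ is an up-color kernel of $G_{2i-1}$, while every vertex of an even block $G_{2i}$ is up-color-absorbed by $K_{2i-1}$ by hypothesis; finally $K$ contains no vertex of color zero because each $K_{2i-1}$ is up-color absorbent. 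The second condition is handled symmetrically with $K=\bigcup_i K_{2i}$.

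For the forward direction, suppose $C_{2n}(\mathcal{H})$ has an up-color kernel $K$ and set $S=\{i: K\cap V(G_i)\neq\emptyset\}$. Two observations pin down $S$. (i) $S$ contains no two consecutive indices: if $i,i+1\in S$, choosing kernel vertices in $G_{i+1}$ and $G_i$ produces an arc inside $K$, contradicting independence. (ii) If $i\notin S$ then $i-1\in S$: a vertex $v\in V(G_i)$ is then non-kernel and its only possible absorbers lie in $K\cap V(G_{i-1})$, since its internal out-neighbours meet $K$ in $K\cap V(G_i)=\emptyset$; hence this set must be nonempty. As $K\neq\emptyset$ forces $S\neq\emptyset$, I would combine (i) and (ii) to show $S$ is exactly one parity class: starting from some $i\in S$, (i) gives $i+1\notin S$, and (ii) applied to $i+2$ forces $i+2\in S$, so induction around the cycle---valid precisely because $2n$ is even---shows $S$ equals all indices of the parity of $i$.

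Once $S$ is, say, the odd indices, the block conditions fall out exactly as in the path case: $K\cap V(G_{2i-1})$ is independent and, since the neighbouring even blocks contribute nothing to $K$, every non-kernel vertex of $G_{2i-1}$ must be up-color-absorbed internally, so $K\cap V(G_{2i-1})$ is an up-color kernel of $G_{2i-1}$; moreover every vertex of $G_{2i}$, being non-kernel with out-neighbours in $K$ only inside $G_{2i-1}$, is up-color-absorbed by it. This gives the first condition, and the even-parity case gives the second. The main obstacle is simply establishing that the cyclic closure is consistent, i.e. the parity argument of the previous paragraph; this is exactly where evenness of the cycle is used, and it is the reason the analogous statement fails for odd cycles.
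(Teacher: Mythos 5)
Your proposal is correct, and it follows the route the paper intends: the corollary is stated without proof as a direct adaptation of the preceding proposition for $P_n(\mathcal{H})$, and your argument is exactly that adaptation, with the set $S$ of blocks meeting the kernel and the alternation/parity argument supplying the one genuinely new step needed to close the cycle (and correctly isolating where evenness is used). No gaps.
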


\begin{coro}
    Let $C_{2n+1}$ be the directed cycle graph and $\mathcal{H}$ a family of $2n+1$ graphs. Then the Zykov sum $C_{2n+1}(\mathcal{H})$ does not have an up-color kernel.
\end{coro}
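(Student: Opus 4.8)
The plan is to argue by contradiction at the level of \emph{which blocks} $G_i$ the kernel meets, reducing the problem to the non-bipartiteness of the odd cycle. Suppose $C_{2n+1}(\mathcal{H})$ had an up-color kernel $K$, and for each index $i$ write $K_i = K \cap V(G_i)$ and $S = \{i : K_i \neq \emptyset\} \subseteq \mathbb{Z}_{2n+1}$. First I would record the only structural fact about the Zykov sum that the argument needs: since an arc of $C_{2n+1}$ produces a \emph{complete} set of arcs from one block to the next, the out-neighbourhood of any vertex of $G_i$ is contained in $V(G_i) \cup V(G_{i-1})$ (fixing the orientation in which the out-neighbour of $i$ is $i-1$, indices read modulo $2n+1$) and contains all of $V(G_{i-1})$. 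No property of the internal digraphs $G_i$ or of the colouring will be used.

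Next I would extract two constraints on $S$. The independence of $K$ gives the first: if $i$ and $i-1$ were both in $S$, choose $a \in K_i$ and $b \in K_{i-1}$; the complete arc set from $H_i$ to $H_{i-1}$ furnishes the arc $(a,b)$, contradicting independence. Hence no two cyclically consecutive indices lie in $S$. The absorbency of $K$ (which up-color absorbency implies, ignoring the colour inequality) gives the second: if $i \notin S$, then $K_i = \emptyset$, so every $v \in V(G_i)$ must have an out-neighbour in $K$; but its out-neighbours lie in $V(G_i) \cup V(G_{i-1})$ and $K_i = \emptyset$, forcing $K_{i-1} \neq \emptyset$, i.e. $i-1 \in S$. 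Thus no two cyclically consecutive indices lie outside $S$ either.

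Putting the two constraints together yields $i \in S \iff i-1 \notin S$ for every $i$, so membership in $S$ must strictly alternate around the cycle. Walking once around $C_{2n+1}$ the parity then flips $2n+1$ times yet must return to its starting value, which is impossible for an odd number of steps; equivalently, $S$ and its complement would give a proper $2$-colouring of the odd cycle $C_{2n+1}$, which is not bipartite. This contradiction shows no such $K$ exists, for every family $\mathcal{H}$ and every $c$-colouring.

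The argument is essentially routine, and I do not expect a genuine obstacle; the only point needing care is the claim that the out-neighbours of a vertex of $G_i$ stay within $V(G_i) \cup V(G_{i-1})$, so I would invoke the precise definition of the Zykov sum (arcs appear completely and only between consecutive blocks) rather than any feature of the $G_i$ or the colours. Since the colour inequality is never used, what is really proved is the stronger statement that $C_{2n+1}(\mathcal{H})$ has no kernel at all, recovering the classical fact that the directed triangle has no kernel as the case where each $G_i$ is a single vertex.
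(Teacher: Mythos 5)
Your argument is correct, and it is exactly the alternation argument the paper relies on: the corollary is stated without proof, but it is meant to follow from the block-by-block analysis in the preceding proposition on $P_n(\mathcal{H})$ (kernel meets $G_i$ iff it misses $G_{i-1}$), which closed up around an odd cycle gives the parity contradiction you describe. Your additional observation that the colour inequality is never used, so that $C_{2n+1}(\mathcal{H})$ has no kernel at all, is a correct and slightly stronger conclusion than the one stated.
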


\subsection{Up-color kernels in $D$, in $D\triangledown  \mathcal{H}$ and in $D\vartriangle \mathcal{H}$.}\label{inex-crown}

Inspired by undergraduate thesis \cite{MV}, where several generalizations of the corona operation are presented, we give the following definition:

\begin{defi}\label{incr}
     Let $D$  be a digraph, 
     $\mathcal{H}= (H_{i})_{i\in I}$ be a sequence of pairwise disjoint  digraphs with $V(H_{i})=\{ y_{1}^{i},\ldots, y_{p_{i}}^{i}\}$ and $p_{i} \geq 2$ for each $i\in I$. The generalized in-crown and ex-crown of the digraph $D$ and the sequence $\mathcal{H}$ are the digraphs $D\triangledown  \mathcal{H}$ and $D\triangle  \mathcal{H}$ respectively such that:
     \begin{itemize}
         \item $V(D\triangledown  \mathcal{H})= V(D)\bigcup\limits_{i\in I} V(H_{i}) = V(D\triangle  \mathcal{H})$
         \item $A(D\triangledown  \mathcal{H})= A(D)\bigcup\limits_{i\in I} A(H_{i})\bigcup\limits_{i\in I} \{ (y_{t}^{i},x)\},$ 
         \item  $A(D\triangle  \mathcal{H})= A(D)\bigcup\limits_{i\in I} A(H_{i})\bigcup\limits_{i\in I} \{(x,y_{t}^{i})\},$
      \end{itemize} with $t\in \{1\ldots,p_{i}\}$ for some $x\in V(D). $ 
\end{defi}

\begin{figure}[h!]
    \centering

    \begin{tikzpicture}[scale=2]
		\tikzstyle{every node}=[minimum width=0pt, inner sep=1.5pt, circle]
			\draw (-3.12,0.3) node[draw, fill=yellow] (0) { \tiny 0};
			\draw (-2.0,0.3) node[draw, fill=orange] (1) { \tiny 1};
			\draw (-1.05,0.3) node[draw, fill=pink] (2) { \tiny 2};
			\draw (-0.08,0.3) node[draw, fill=magenta] (3) { \tiny 3};
			\draw (-2,1.3) node[draw, fill=orange] (4) { \tiny 1};
			\draw (-1.05,1.3) node[draw, fill=pink] (5) { \tiny 2};
			\draw (-1.5,1.1) node[draw, fill=yellow] (6) { \tiny 0};
			\draw (0.16,-0.5) node[draw, fill=orange] (7) { \tiny 1};
			\draw (-0.44,-0.5) node[draw, fill=pink] (8) { \tiny 2};
			\draw (-1.04,-0.5) node[draw, fill=yellow] (9) { \tiny 0};
			\draw (-2.12,-0.5) node[draw, fill=pink] (10) { \tiny 2};
			\draw (-3.12,-0.5) node[draw, fill=orange] (11) { \tiny 1};
			\draw (-2.63,-0.8) node[draw, fill=yellow] (12) { \tiny 0};
			\draw[->, blue, thick]  (0) edge (1);
			\draw[->, blue, thick]  (1) edge (2);
			\draw[->, blue, thick]  (2) edge (3);
			\draw[->, red, thick]  (6) edge (5);
			\draw[->, red, thick]  (4) edge (6);
            \draw[->, red, thick] (5) edge (4);
			\draw[->]  (5) edge (2);
			\draw[->]  (6) edge (2);
			\draw[->]  (4) edge (2);
			\draw[->] (4) edge (1);
			\draw[->] (6) edge (1);
			\draw[->] (5) edge (1);
			\draw[->, teal, thick] (7) edge (8);
			\draw[->, teal, thick] (8) edge (9);
			\draw[->] (7) edge (3);
			\draw[->] (8) edge (3);
			\draw[->] (9) edge (3);
			\draw[->] (7) edge (2);
			\draw[->] (8) edge (2);
			\draw[->] (9) edge (2);
			\draw[->, violet, thick] (10) edge (11);
			\draw[->, violet, thick]  (11) edge (12);
			\draw[->, violet, thick]  (10) edge (12);
			\draw[->]  (10) edge (1);
			\draw[->]  (11) edge (1);
			\draw[->]  (12) edge (1);
            \draw[->, bend left, violet, thick] (11) edge (10);
		\end{tikzpicture}
    \caption{Example of an in-crown of $D$ with $\mathcal{H}$, where $D$ is the path on four vertices whose arcs are blue.}
    \label{incrown}
\end{figure}

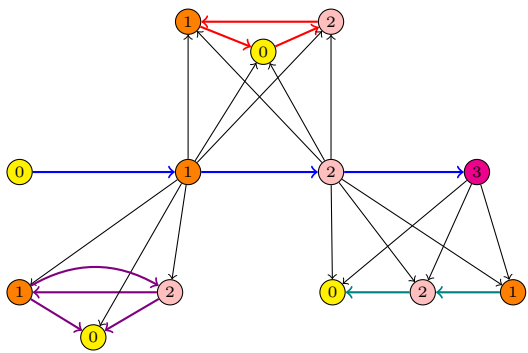
\begin{figure}[h!]
    \centering
    \begin{tikzpicture}[scale=2]
		\tikzstyle{every node}=[minimum width=0pt, inner sep=1.5pt, circle]
			\draw (-3.12,0.3) node[draw, fill=yellow] (0) { \tiny 0};
			\draw (-2.0,0.3) node[draw, fill=orange] (1) { \tiny 1};
			\draw (-1.05,0.3) node[draw, fill=pink] (2) { \tiny 2};
			\draw (-0.08,0.3) node[draw, fill=magenta] (3) { \tiny 3};
			\draw (-2,1.3) node[draw, fill=orange] (4) { \tiny 1};
			\draw (-1.05,1.3) node[draw, fill=pink] (5) { \tiny 2};
			\draw (-1.5,1.1) node[draw, fill=yellow] (6) { \tiny 0};
			\draw (0.16,-0.5) node[draw, fill=orange] (7) { \tiny 1};
			\draw (-0.44,-0.5) node[draw, fill=pink] (8) { \tiny 2};
			\draw (-1.04,-0.5) node[draw, fill=yellow] (9) { \tiny 0};
			\draw (-2.12,-0.5) node[draw, fill=pink] (10) { \tiny 2};
			\draw (-3.12,-0.5) node[draw, fill=orange] (11) { \tiny 1};
			\draw (-2.63,-0.8) node[draw, fill=yellow] (12) { \tiny 0};
			\draw[->, blue, thick]  (0) edge (1);
			\draw[->, blue, thick]  (1) edge (2);
			\draw[->, blue, thick]  (2) edge (3);
			\draw[->, red, thick]  (6) edge (5);
			\draw[->, red, thick]  (4) edge (6);
            \draw[->, red, thick] (5) edge (4);
			\draw[->]  (2) edge (5);
			\draw[->]  (2) edge (6);
			\draw[->]  (2) edge (4);
			\draw [->] (1) edge (4);
			\draw[->]  (1) edge (6);
			\draw[->] (1) edge (5);
			\draw[->, teal, thick] (7) edge (8);
			\draw[->, teal, thick] (8) edge (9);
			\draw[->] (3) edge (7);
			\draw[->] (3) edge (8);
			\draw[->] (3) edge (9);
			\draw[->] (2) edge (7);
			\draw[->] (2) edge (8);
			\draw[->] (2) edge (9);
			\draw[->, violet, thick] (10) edge (11);
			\draw[->, violet, thick]  (11) edge (12);
			\draw[->, violet, thick]  (10) edge (12);
			\draw[->]  (1) edge (11);
			\draw[->]  (1) edge (10);
			\draw[->]  (1) edge (12);
            \draw[->, bend left, violet, thick] (11) edge (10);
		\end{tikzpicture}
    \caption{Example of an ex-crown of $D$ with $\mathcal{H}$, where $D$ is the path on four vertices whose arcs are blue.}
    \label{excrown}
\end{figure}

In the theorems \ref{in} and \ref{ex} we will assume that $\mathcal{H}=(H_{i})_{i\in I}$ holds the condition described in the  Definition \ref{incr}.
\begin{teo}\label{in}
    Let $D$ a $c$-colored digraph and $\mathcal{H}=(H_{i})_{i\in I}$  be a sequence of pairwise disjoint $c$-colored digraphs and $D \triangledown \mathcal{H}$ the generalized in-crown. Then  $D \triangledown \mathcal{H}$ has an up-color kernel if and only if there exists an up-color kernel $K$ for $D$ such that for all $(y,x)\in A(D\triangledown  \mathcal{H})$ such that $x\in K$ and $y\in H_{i},$ $c(y)<c(x)$ and, if there are no arcs from $H_i$ to $K$, then $H_i$ has an up-color kernel $N_i.$
      
\end{teo}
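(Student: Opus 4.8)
The plan is to exploit the one-directional structure of the added arcs. In $D\triangledown\mathcal{H}$ every arc joining $\mathcal{H}$ to $D$ runs \emph{from} a vertex of some $H_i$ \emph{into} the single attachment vertex $x_i\in V(D)$ prescribed by Definition \ref{incr}, and no arc runs from $D$ into any $H_i$. Consequently a vertex of $D$ has exactly the same out-neighbourhood in $D\triangledown\mathcal{H}$ as in $D$, so it can only be up-color absorbed by vertices of $D$. I would build the whole argument on this observation, proving necessity by restricting a given up-color kernel to $V(D)$ and to each $V(H_i)$, and proving sufficiency by gluing an up-color kernel of $D$ together with up-color kernels of the relevant $H_i$.

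For necessity, suppose $\mathcal{K}$ is an up-color kernel of $D\triangledown\mathcal{H}$ and set $K=\mathcal{K}\cap V(D)$. First I would check that $K$ is an up-color kernel of $D$: it is independent and contains no color-$0$ vertex as a subset of $\mathcal{K}$, and if $v\in V(D)\setminus K$, then the vertex of $\mathcal{K}$ absorbing $v$ must lie in $V(D)$ by the structural observation, hence in $K$. For condition (1), take an arc $(y,x)$ with $y\in H_i$ and $x\in K$; since all of $H_i$ points to $x=x_i\in\mathcal{K}$, independence of $\mathcal{K}$ forces $\mathcal{K}\cap V(H_i)=\emptyset$, so the only out-neighbour of $y$ in $\mathcal{K}$ is $x_i$, whence $y$ must be absorbed by $x_i$ and $c(y)<c(x)$. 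For condition (2), if no arc leaves $H_i$ towards $K$, I would show $N_i:=\mathcal{K}\cap V(H_i)$ is an up-color kernel of $H_i$: any $y\in V(H_i)\setminus\mathcal{K}$ cannot be absorbed through an arc into $D$ (that would be an arc from $H_i$ to $K$), so it is absorbed inside $H_i$ by $N_i$.

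For sufficiency, given an up-color kernel $K$ of $D$ with properties (1) and (2), I would set $\mathcal{K}=K\cup\bigcup_{i:\,x_i\notin K}N_i$ and verify the kernel axioms. Independence holds because $K$ and each $N_i$ are independent, the only arcs between an $H_i$ and $D$ point to $x_i\notin K$, and distinct members of $\mathcal{H}$ are non-adjacent; the set contains no color-$0$ vertex by hypothesis. For absorbency I would split on the location of $v\notin\mathcal{K}$: a vertex of $D$ is absorbed by $K$ since $K$ is an up-color kernel of $D$; a vertex of an $H_i$ with $x_i\notin K$ is absorbed inside $H_i$ by $N_i$; and a vertex $y$ of an $H_i$ with $x_i\in K$ is absorbed by $x_i$ itself, precisely because condition (1) guarantees $c(y)<c(x_i)$.

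I expect the main obstacle to be the bookkeeping around the attachment vertex $x_i$, specifically the dichotomy between $x_i\in K$ and $x_i\notin K$, together with the fact that it is exactly the single-attachment structure of the in-crown (all of $H_i$ feeding into one $x_i$) that makes independence force $\mathcal{K}\cap V(H_i)=\emptyset$ whenever $x_i\in\mathcal{K}$. This is what turns the seemingly one-sided requirement (1) into a genuine necessary condition and lets the two cases of the absorbency argument close cleanly.
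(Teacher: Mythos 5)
Your proposal is correct and takes essentially the same route as the paper's proof: restrict an up-color kernel of $D\triangledown\mathcal{H}$ to $V(D)$ and to each $V(H_i)$ for necessity, and glue $K$ with the $N_i$ of those $H_i$ sending no arc into $K$ for sufficiency. If anything, your justification of the key necessity step --- that once the attachment vertex of $H_i$ lies in $\mathcal{K}$, independence forces $\mathcal{K}\cap V(H_i)=\emptyset$, so that vertex is the \emph{only} possible absorber of each $y\in H_i$ and hence $c(y)<c(x)$ --- is precisely the argument the paper's own proof leaves implicit when it passes directly from $x\notin K$ to $c(x)<c(y)$.
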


\begin{proof}
    Assume the conditions hold for $D$ and $\mathcal{H}.$ Then clearly $K \cup (\bigcup_{i\in I}N_i)$ is an up-color kernel for $D \triangledown \mathcal{H}$. Now assume that $D \triangledown \mathcal{H}$ has an up-color kernel $K$. Since $D$ is a terminal component of $D \triangledown \mathcal{H}$, $K \cap D$ must be a kernel for $D.$ Let $(x,y)$ be an arc for $ x\in H_i$ and $y\in K.$ Since $K$ is an up-color kernel, $x\notin K$ and thus $ c(x) <c(y).$ 
    Finally, assume that $H_i$ is such that there are no arcs from $H_i$ to $K$ for some $i\in I.$ Then $K\cap H_i$ must be an up-color kernel for $H_i.$
\end{proof}

\begin{teo}\label{ex}
    Let $D$ a $c$-colored digraph and $\mathcal{H}=(H_{i})_{i\in I}$  be a sequence of pairwise disjoint $c$-colored digraphs and $D \triangle \mathcal{H}$ the generalized ex-crown. Then $D \triangle \mathcal{H}$ has an up-color kernel if and only if the following conditions hold:
    \begin{itemize}
        \item for all $i\in I,$ $H_{i}$ has up-color kernel $N_{i}$,
        \item given $B=\{x\in V(D): \text{ there is no } (x,H_{i})\in A(D \triangle \mathcal{H})\},$ the subdigraph induced by $B$ has up-color kernel $K.$
        \item for all $(x,y)\in D \triangle \mathcal{H}$ such that $y\in K \cup N_i$, there exists $z\in K \cup N_i$ such that $(x,z)\in A (D \triangle \mathcal{H})$ with $c(x)<c(z).$
    \end{itemize}
\end{teo}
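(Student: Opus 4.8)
The plan is to prove both implications around the natural candidate set $\mathcal{K}=K\cup\bigcup_{i\in I}N_i$, where $K$ is an up-color kernel of the subdigraph induced by $B$ and each $N_i$ is an up-color kernel of $H_i$. The structural features I will exploit throughout come directly from the ex-crown construction: every $H_i$ is a \emph{terminal} part, since the only arcs added between $D$ and $\mathcal{H}$ point from $D$ into the $H_i$, so no arc leaves $V(H_i)$, and there are no arcs between distinct members of the family. I will also use that each attachment vertex $x\in V(D)\setminus B$ has arcs to \emph{all} vertices of the $H_i$ it is joined to (the arcs $(x,y^i_t)$ range over $t\in\{1,\dots,p_i\}$), and that an up-color kernel of a nonempty digraph is necessarily nonempty, because the empty set absorbs nothing.

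For sufficiency I would take $\mathcal{K}=K\cup\bigcup_{i\in I}N_i$ and verify the three defining properties. Independence is immediate from the structural remarks: $K$ and each $N_i$ are independent inside their own pieces, $K\subseteq B$ sends no arc into any $H_i$, and there are no arcs among the $H_i$ nor from an $H_i$ back to $D$; hence no arc joins two vertices of $\mathcal{K}$. No color-zero vertex appears, since $K$ and the $N_i$ are up-color kernels. For up-color absorbency I would split the complement into three cases: a vertex of $B\setminus K$ is absorbed by $K$ inside $B$; a vertex of $V(H_i)\setminus N_i$ is absorbed by $N_i$ inside $H_i$; and a vertex of $V(D)\setminus B$ is absorbed by the third hypothesis, which supplies a strictly higher-colored out-neighbor lying in $\mathcal{K}$.

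For necessity I would start from an arbitrary up-color kernel $\mathcal{K}$ of $D\triangle\mathcal{H}$. Since each $H_i$ is terminal, the out-neighbors of its vertices all lie inside $H_i$, so the absorbency of $V(H_i)$ must be realized within $H_i$; hence $N_i:=\mathcal{K}\cap V(H_i)$ is up-color absorbent and independent in $H_i$, that is, an up-color kernel, giving the first condition. Next I claim $\mathcal{K}\cap V(D)\subseteq B$: any $x\in V(D)\setminus B$ has arcs to every vertex of some $H_i$, in particular to a vertex of the nonempty $N_i\subseteq\mathcal{K}$, so independence forces $x\notin\mathcal{K}$. Consequently $\mathcal{K}\cap V(D)=\mathcal{K}\cap B$, and for a vertex $b\in B\setminus\mathcal{K}$ its absorbing vertex lies in $V(D)$ (as $b$ has no arc into any $H_i$) and therefore in $B$; this shows $\mathcal{K}\cap B$ is an up-color kernel of the subdigraph induced by $B$, the second condition. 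The third condition is then precisely the assertion that the vertices of $V(D)\setminus B$, which we have just shown lie outside $\mathcal{K}$, are up-color absorbed by $\mathcal{K}$.

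The step I expect to be the crux is the inclusion $\mathcal{K}\cap V(D)\subseteq B$ in the necessity direction: everything hinges on the fact that an attachment vertex joins to all of its $H_i$ and that $N_i$ is nonempty, which together force such a vertex out of the kernel and, as a byproduct, keep the absorption of $B$-vertices inside $B$. The same two facts are what make the third hypothesis usable in the sufficiency direction, since they guarantee that every $x\in V(D)\setminus B$ actually has an arc into $\mathcal{K}$. I would therefore record the nonemptiness of each $N_i$ explicitly (it follows from $p_i\ge 2$ together with the observation that the empty set is never up-color absorbent in a nonempty digraph), as the argument collapses without it.
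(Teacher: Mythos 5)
Your proof is correct and follows essentially the same route as the paper's: build the candidate kernel as $K\cup\bigcup_{i\in I}N_i$, use the terminality of each $H_i$ to force $\mathcal{K}\cap V(H_i)$ to be an up-color kernel of $H_i$, and restrict the $D$-part of the kernel to $B$. You in fact make explicit two points the paper leaves implicit --- that $\mathcal{K}\cap V(D)\subseteq B$ (via nonemptiness of each $N_i$ and the fact that an attachment vertex sends arcs to all of its $H_i$), and hence that the absorption of the vertices of $B\setminus\mathcal{K}$ must take place inside $\langle B\rangle$ --- which is precisely what is needed to justify the paper's final assertion that $B\cap\mathcal{K}$ is an up-color kernel of $\langle B\rangle$.
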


\begin{proof}
    Assume all three items hold for $D$ and $\mathcal{H}.$ Then clearly $K \bigcup_{i\in I}N_i$ is an up-color kernel for $D \triangle \mathcal{H}$. Now assume that $D \triangle \mathcal{H}$ has an up-color kernel $K.$ Since each $H_i$ is a terminal component, $K \cap H_i$ must be an up-color kernel for $H_i$ for every $i\in I.$ Let $(x,y)$ be an arc such that $y\in D$ and $x \in K$. Then, since $y \notin x$, there exists a vertex $z\in K$ such that $c(y)<c(z).$ Finally, notice that since $B \subset D$, we have that $B \cap K $ must be an up-color kernel for $\langle B \rangle.$
\end{proof}

\subsection{Up-color kernels in $D$ and in $L(D)$}\label{L(D)}

\begin{defi}
The line digraph of $D=(V(D),A(D))$ is the digraph denoted by $L(D)$ with its set of vertices as the set of arcs of $D$, and for any $h,k\in A(D)$ there is $(h, k)\in A(L(D)$  if and only if the corresponding arcs $h,k$ induce a directed path in $D$; this is, the terminal endpoint of $h$ is the initial endpoint of $k$.
\end{defi}
In what follows, we denote the arc $h=(u, v)\in A(D)$ and the vertex $h$ in $L(D)$ by the same symbol. 

\begin{defi}\label{colout}
Let $D$ be a $c$-colored digraph and $L(D)$ its line digraph, the \textbf{outer coloration} of $L(D),$ is the vertex-coloration  defined as:  $c_{_{L(D)}}((u,v)))=c_{_{D}}(v).$
\end{defi}

\begin{defi}\label{def1}
Let $D$ be a digraph, we denote by $\mathcal{P}(V(D))$ the power set of $V(D)$ and by $\mathcal{P}(V(L(D)))$ the power set of  $V(L(D))$. Let us define  $f:\mathcal{P}(V(D))\longrightarrow \mathcal{P}(V(L(D)))$ as follows:

For every $Z\subseteq V(D)$, $f(Z)=\{(u,x)\in V(L(D)):x\in Z\}=A(\langle N^-[Z]\rangle).$ 
\end{defi}
The following Lemma was proved by  Galeana-Sánchez and Pastrana-Ramírez in \cite{GP} and will be useful in this section.
\begin{lema}\cite{GP}\label{Zind}
Let $D$ a digraph, then $Z\subseteq V(D)$ is independent in $D$ if and only if $f(Z)$ is independent in 
$L(D).$ 
\end{lema}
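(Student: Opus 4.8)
The plan is to translate both independence conditions into statements about the arcs of $D$ and then compare them directly. The key preliminary observation is a combinatorial description of independence in the line digraph: a set $F\subseteq V(L(D))=A(D)$ is independent in $L(D)$ if and only if no two of its members form a directed path of length two in $D$, that is, no vertex of $D$ is simultaneously the terminal endpoint of some arc of $F$ and the initial endpoint of some other arc of $F$. This is immediate from the definition of $L(D)$, since $(h,k)\in A(L(D))$ exactly when the head of $h$ equals the tail of $k$. In parallel I would record that $Z\subseteq V(D)$ is independent if and only if $Z\cap N^-(Z)=\emptyset$, i.e.\ no arc of $D$ has both endpoints in $Z$. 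With these two reformulations the lemma becomes a direct comparison, recalling that $f(Z)=\{(u,x)\in A(D):x\in Z\}$ is precisely the set of arcs of $D$ whose head lies in $Z$.

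For the implication ``$Z$ independent $\Rightarrow f(Z)$ independent'' I would argue by contradiction. Suppose $f(Z)$ is not independent; by the characterization above there are arcs $h=(u,v)$ and $k=(v,w)$ in $f(Z)$ with the head of $h$ equal to the tail of $k$. Since $h\in f(Z)$ we have $v\in Z$, and since $k\in f(Z)$ we have $w\in Z$; but then $k=(v,w)$ is an arc of $D$ with both endpoints in $Z$, contradicting the independence of $Z$. This direction is clean and uses nothing beyond the two reformulations, so I expect it to be routine.

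For the converse I would use the contrapositive: assuming $Z$ is not independent, I would exhibit two consecutive arcs inside $f(Z)$. If $Z$ is not independent there is an arc $(a,b)\in A(D)$ with $a,b\in Z$, and $(a,b)\in f(Z)$ because its head $b$ lies in $Z$. The natural witness for a path of length two is an arc $(x,a)$ entering $a$: its head $a$ lies in $Z$, so it belongs to $f(Z)$, and its head equals the tail of $(a,b)$, whence $(x,a),(a,b)$ certifies that $f(Z)$ is not independent. The main obstacle is exactly this last step, because the in-arc $(x,a)$ need not exist when $a$ is a source; the description $f(Z)=A(\langle N^-[Z]\rangle)$ is what makes the relevant in-neighbor available, and this is the degenerate phenomenon the paper isolates later through its condition on vertices of in-degree zero and their neighbors. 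I would therefore make the positive-in-degree hypothesis explicit at precisely the point where the second arc is produced, so that the construction goes through and the biconditional is established.
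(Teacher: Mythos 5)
Your forward direction is correct and is actually more robust than the paper's own argument for that half: the paper reasons that $\langle N^-[Z]\rangle$ is a union of inward directed stars, which need not be literally true (arcs among in-neighbours of $Z$ can create longer directed paths inside $\langle N^-[Z]\rangle$, and the identification $f(Z)=A(\langle N^-[Z]\rangle)$ is itself only an over-approximation), whereas your direct argument --- two consecutive arcs of $f(Z)$ would force an arc of $D$ with both endpoints in $Z$ --- uses only the working definition $f(Z)=\{(u,x)\in A(D):x\in Z\}$ and settles that implication completely.

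For the converse you have put your finger on a genuine defect of the statement rather than of your own reasoning. As written the lemma is false: take $V(D)=\{a,b\}$, $A(D)=\{(a,b)\}$ and $Z=\{a,b\}$; then $Z$ is not independent, yet $f(Z)=\{(a,b)\}$ is a single vertex of $L(D)$ and hence independent. The paper's proof of this direction (``taking the endpoints of the arcs in $f(Z)$, we obtain an independent set'') silently replaces $Z$ by the set of heads of arcs of $f(Z)$, which omits precisely the sources of $D$ lying in $Z$ --- the same degenerate case you isolated. Your repair (demand positive in-degree for the tail of the offending arc, equivalently for every vertex of $Z$) is correct, and it is exactly what is implicitly available where the paper later invokes the converse: in Theorem \ref{numeroupkernels} it is applied only to $f^{-1}(H)$, whose elements are by construction heads of arcs of $H$ and therefore have positive in-degree. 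One small caveat: your remark that the description $f(Z)=A(\langle N^-[Z]\rangle)$ is ``what makes the relevant in-neighbor available'' is not right --- if $a$ is a source then $N^-[a]=\{a\}$ and no in-arc materializes; it is only the added in-degree hypothesis that saves the step, so state it as a hypothesis of the lemma rather than as a consequence of the definition.
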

\begin{proof}
Let $D$ be a digraph and $Z$ an independent set. Then $\langle N^-[Z]\rangle$ is a union of (not necessarily disjoint) inward directed stars thus the longest directed path in $\langle N^-[Z]\rangle$ has length two and hence $f(Z)$ is independent in $L(D).$ 

Now assume that $f(Z)$ is independent, thus there is no directed path of length three in $\langle N^-[Z]\rangle.$ This means that when taking the endpoints of the arcs in $f(Z)$, we obtain an independent set.

\end{proof}

\begin{teo}\label{numeroupkernels}
Let $D$ a $c$-colored digraph where for all $u,v\in V(D)$ such that $\delta^{-}(u)=0$ and $v\in N^{+}(u),$ $c_{D}(u)<c_{D}(v)$. Then the number of up-color kernels in $D$ is equal to the number of up-color kernels in the outer coloration of $L(D).$        
\end{teo}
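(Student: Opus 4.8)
The plan is to prove that the map $f$ of Definition \ref{def1}, restricted to up-color kernels, is a bijection from the up-color kernels of $D$ onto the up-color kernels of the outer coloration of $L(D)$; the asserted equality of cardinalities is then immediate. First I would check that $f$ sends up-color kernels to up-color kernels. If $K$ is an up-color kernel of $D$, then $f(K)$ is independent in $L(D)$ by Lemma \ref{Zind}. Every vertex of $f(K)$ is an arc $(u,x)$ with $x\in K$, and its outer color is $c_{L(D)}((u,x))=c_D(x)\neq 0$, since $K$ contains no vertex of color zero; hence $f(K)$ contains no vertex of color zero. For absorbency, take $(u,v)\notin f(K)$, so that $v\notin K$, and choose $w\in K$ with $(v,w)\in A(D)$ and $c_D(v)<c_D(w)$. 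Then $(v,w)\in f(K)$, the pair $(u,v)\to(v,w)$ is an arc of $L(D)$, and $c_{L(D)}((u,v))=c_D(v)<c_D(w)=c_{L(D)}((v,w))$, so $(u,v)$ is up-color absorbed by $f(K)$. Thus $f(K)$ is an up-color kernel of $L(D)$, and this direction uses nothing about in-degree-zero vertices.

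The key structural fact for inverting $f$ is that an up-color kernel of $L(D)$ is \emph{saturated} on the arcs entering each vertex. Two arcs $(u,x)$ and $(u',x)$ with the same head are non-adjacent in $L(D)$, so both may lie in an independent set. Suppose $(u,x)\in K'$ but $(u',x)\notin K'$ for some up-color kernel $K'$ of $L(D)$; then $(u',x)$ must be absorbed by some $(x,w)\in K'$, but $(u,x)\to(x,w)$ is an arc of $L(D)$, contradicting independence of $K'$. Hence if one arc into $x$ belongs to $K'$ then every arc into $x$ does. Writing $Z=\{x\in V(D):\text{some arc into }x\text{ lies in }K'\}$ for the set of heads of $K'$, this shows $K'=f(Z)$. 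Combined with the first step, $f$ is therefore surjective up to the vertices it cannot see, namely those of in-degree zero, which contribute no arcs to any $f(K)$.

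The main obstacle, and the only place the hypothesis is used, is precisely these in-degree-zero vertices, and the plan is to show that the membership of a source in an up-color kernel is forced by the rest of the kernel. If $u$ has $\delta^-(u)=0$, then all neighbours of $u$ are out-neighbours, and by hypothesis each out-neighbour $v$ satisfies $c_D(u)<c_D(v)$. Consequently, for any up-color kernel $K$ of $D$: if some out-neighbour of $u$ lies in $K$, then $u$ is both absorbed (the out-neighbour has higher color) and, by independence, excluded from $K$; whereas if no out-neighbour of $u$ lies in $K$, then $u$ cannot be absorbed and so must lie in $K$. Thus $u\in K$ if and only if $N^+(u)\cap K=\varnothing$, a condition depending only on the non-source part of $K$, which is exactly $Z=\mathrm{heads}(f(K))$; this yields injectivity of $f$ on up-color kernels. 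For surjectivity, given an up-color kernel $K'$ of $L(D)$ with head set $Z$, I would set $K:=Z\cup\{u:\delta^-(u)=0,\ N^+(u)\cap Z=\varnothing\}$ and verify, using Lemma \ref{Zind}, the saturation fact, and the color hypothesis on sources, that $K$ is an up-color kernel of $D$ with $f(K)=K'$; the adjoined sources contribute no arcs, so they disturb neither $f(K)=f(Z)=K'$ nor independence, and the hypothesis guarantees that each source is either correctly absorbed by $Z$ or correctly adjoined. The delicate point to watch is the verification that the reconstructed $K$ is a genuine up-color kernel at the adjoined sources, where the relationship between the color of an in-degree-zero vertex and the colors of its out-neighbours is exactly what must be invoked; I expect this reconciliation of the sources to require the most care.
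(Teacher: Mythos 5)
Your strategy is essentially the paper's: push an up-color kernel of $D$ forward by the map $f$ of Definition \ref{def1}, and reconstruct a kernel of $D$ from one of $L(D)$ by taking the set $Z$ of heads and adjoining those in-degree-zero vertices with no out-neighbour in $Z$. The paper packages this as a second map $g(H)=f^{-1}(H)\cup D(H)$ and proves two injections rather than one bijection, but the content is the same; your explicit ``saturation'' observation (an up-color kernel of $L(D)$ contains either all or none of the arcs into a given vertex) is a genuinely useful addition, since it is what makes $f^{-1}$ well defined, a point the paper leaves implicit.

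However, the step you yourself flag as delicate does fail, and it fails in the paper's Claim 3 in exactly the same way. When you adjoin a source $u$ with $N^+(u)\cap Z=\varnothing$ to the reconstructed set $K$, you must also verify condition 2 of Definition \ref{upkernel}: $K$ may contain no vertex of color zero. The hypothesis only gives $c_D(u)<c_D(v)$ for $v\in N^+(u)$; it does not prevent $c_D(u)=0$. Concretely, let $D$ be the path $u\to v\to w$ with $c(u)=0$, $c(v)=1$, $c(w)=2$. The hypothesis holds, and $D$ has no up-color kernel: the sink $w$ must lie in every kernel, so $v$ is excluded, so the source $u$ is unabsorbed and would have to enter the kernel with color $0$. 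Yet $L(D)$ is the path $(u,v)\to(v,w)$ with outer colors $1,2$ and has exactly one up-color kernel, namely $\{(v,w)\}$. So the counts are $0$ and $1$, the reconstruction cannot be completed, and the statement is false as written; it needs the additional hypothesis that every vertex of in-degree zero has positive color (equivalently, $0<c_D(u)<c_D(v)$). With that strengthening your outline does go through, and your forced-membership criterion $u\in K\iff N^+(u)\cap K=\varnothing$ for sources yields both injectivity and surjectivity cleanly.
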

\begin{proof}
Let us denote by $K$ the set of all of the up-color kernel in $D$ and  by $K^{\ast}$ the set of all of the up-color kernels in the outer coloration of $L(D).$

\

\textit{Claim 1.} If $Z\in K$ then $f(Z)\in K^{\ast}.$  

Since $Z\in K$,   $Z$  is an independent set, and  by Lemma \ref{Zind}  $f(Z)$ is also an independent set thus we only need to show that $f(Z)$ is up-color absorbent in $L(D).$ 
Let $a=(x,y)$  in $V(L(D))\setminus f(Z),$ thus $y\notin Z.$ Since $Z\in K$, there exists $z\in Z$  such that $b=(y,z)\in A(D)$ and  $c_{_{D}}(y)<c_{_{D}}(z).$ This means that $(a,b)=((x,y),(y,z))\in A(L(D))$ with $c_{_{L(D)}}(a)<c_{_{L(D)}}(b).$  Hence $f(Z)$ is an up-color absorbent set and   $f(Z)\in K^{\ast}.$  

\

\textit{Claim 2.} The function $f'=f|_K:K\longrightarrow K^{\ast}$ is an injection.

Let $Z_{1},Z_{2}\in K$ be such that  $Z_{1}\neq Z_{2}.$ We may assume that there exists $u\in Z_1$ such that $u\notin Z_2.$
Since $Z_{2}$ is an up-color kernel, there exists $v\in Z_{2}$ such that $b=(u,v)\in A(L(D)).$ It follows from Definition \ref{def1} that $b\in f'(Z_{2}).$ Since $u\in Z_{1}$ and $Z_{1}$ is an independent set we have that $v\notin Z_{1},$ thus $b\notin f'(Z_{1}).$ Therefore $b\in f'(Z_{2})\setminus f'(Z_{1})$ and $f'(Z_{1})\neq f'(Z_{2}).$ 

\

Let us define $g:\mathcal{P}(V(L(D)))\longrightarrow \mathcal{P}(V(D))$ as $g(H)=f^{-1}(H)\bigcup D(H)$ where $D(H):= $
$$\{x\in V(D):\delta_{D}^{-}(x)=0 \text{ and there is no arcs between $x$ and the elements in $f^{-1}(H)$}\}.$$

\textit{Claim 3.} If $H\in K^{\ast}$ then $g(H)\in K.$ 
\\
 If $H$ is an independent set in the outer coloration of $L(D)$ then $g(H)$ is an independent set in $D$ since, by Lemma \ref{Zind}, $f^{-1}(H)$ is independent and by definition $D(H)\cup f^{-1}(H)$ is independent.
\\

We are going to prove that $g(H)$ is an up-color absorbent set in $D$ when $H \in K^{\ast}.$ 
 
 Let $u\in V(D)\setminus g(H).$ Since $u\notin (f^{-1}(H) \cup D(H))$, we have that there is no arc in $H$ whose terminal endpoint is $u$, and at
least one of the two following conditions holds: 
\begin{enumerate}
    \item  $\delta^{-}_{D}(u)>0$ thus there is $a=(x,u)\in V(L(D))\setminus H.$ Then there  exists $b=(u,v)$ such that $v\in g(H),$  $(a,b)\in A(L(D))$ and $c_{_{L(D)}}(a)<c_{_{L(D)}}(b).$ Therefore  $c_{_{D}}(u)<c_{_{D}}(v)$ with $v \in g(H).$ 
    \item $\delta^{-}_{D}(u)=0$ and there exists an arc between $u$ and an element in $f^-(H)$, this is, there exists $(u,v)\in A(D)$ for some $v\in f^-(H).$ Since $\delta^{-}_{D}(u)=0$ we must have that $c_{_{D}}(u)<c_{_{D}}(v)$.
\end{enumerate}
Hence, $g(H)$ is an up-color absorbing set.

\

\textit{Claim 4.} The function $g':K^{\ast}\longrightarrow K$  where $g'$ is the restriction of $g$ to $K^{\ast}$ is an injection.

Let $Z_{1},Z_{2}\in K^{\ast}$ such that  $Z_{1}\neq Z_{2}.$ Suppose that $ Z_{1}\setminus Z_{2}\neq \emptyset,$ so let $a=(u,v)\in Z_{1}\setminus Z_{2},$ by the definition of $g'$ we have that $v\in g'(Z_{1})$. Now we will prove that $v\notin g'(Z_{2}).$   Since $Z_{2}$ is an up-color absorbent and $a\notin Z_{2},$ there exists $b\in Z_{2}$ such that $(a,b)\in A(L(D))$ and $c_{_{L(D)}}(a)<c_{_{L(D)}}(b).$  By the definition of $L(D)$ $b=(v,w).$  Notice that $w\in g'(Z_{2}).$  By Claim 3.1 $g'(Z_{2})$ is an independent set, therefore $v\notin g'(Z_{2}).$  Thus $v\in g'(Z_{1})\setminus g'(Z_{2}) $  and $g'(Z_{1})\neq g'(Z_{2}).$

It follows from claims \textit{2} and \textit{4} that $|K|<|K^{\ast}|<|K|.$ Hence $|K|=|K^{\ast}|$
\end{proof}

Figure \ref{contraejemplo1} give us an example why the hypothesis that for all $u,v\in V(D)$ such that $\delta^{-}(u)=0$ and $v\in N^{+}(u),$ $c_{D}(u)<c_{D}(v)$  necessary in Theorem \ref{numeroupkernels}. Observe that the set of the blue vertices in $L(D)$ is an up-color kernel but $D$ has no up-color kernels.

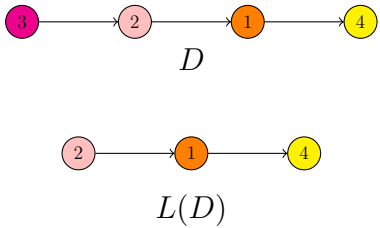
\begin{figure}[h!]
    \centering
\begin{tikzpicture}
        \node[circle, draw, fill=magenta, scale=.6] (0) at (0,0){3};
        \node[circle, draw, fill=pink, scale=.6] (1) at (1.5,0){2};
        \node[circle, draw, fill=orange, scale=.6] (2) at (3,0){1};
        \node[circle, draw, fill=yellow, scale=.6] (3) at (4.5,0){4};
        \node (v) at (2.25,-0.5){$D$};
        \draw[->] (0)--(1);
        \draw[->] (1)--(2);
        \draw[->] (2)--(3);

        \node[circle, draw, fill=pink, scale=.6](1x) at (0.75,-1.75){2};
        \node[circle, draw, fill=orange, scale=.6] (2x) at (2.25,-1.75){1};
        \node[circle, draw, fill=yellow, scale=.6] (3x) at (3.75,-1.75){4};
        \draw[->] (1x)--(2x);
        \draw[->] (2x)--(3x);
        \node (lv) at (2.25,-2.5){$L(D)$};
\end{tikzpicture}
    
    \caption{$L(D)$ having an up-color kernel and $D$ without up-color kernel.}
    \label{contraejemplo1}
\end{figure}

\begin{defi}\label{colin}
Let $D$ be a $c$-colored digraph and $L(D)$ its line digraph, the \textbf{inner coloration} of $L(D),$ is the vertex-coloration  defined as: if $h$ is an arc in $D$ such that $v$ is its initial endpoint then $c_{_{L(D)}}(h)=c_{_{D}}(v).$
\end{defi}

Definition \ref{colin} is an other inherent coloring for $L(D)$ induced by the coloring of $D$, thus a natural question is if the inner coloration works as an hypothesis for Theorem \ref{numeroupkernels}. However, Figure \ref{contraejemplo2} shows that the set of blue vertices is an up-color kernel in $D$ and it is easy to verify that $L(D)$ does not have an up-color kernel.  
\begin{figure}[h!]
    \centering
\begin{tikzpicture}
        \node[circle, draw, fill=pink, scale=.6] (0) at (0,0){2};
        \node[circle, draw, fill=magenta, scale=.6] (1) at (1.5,0){3};
        \node[circle, draw, fill=orange, scale=.6] (2) at (3,0){1};
        \node[circle, draw, fill=yellow, scale=.6] (3) at (4.5,0){4};
        \node (v) at (2.25,-0.5){$D$};
        \draw[->] (0)--(1);
        \draw[->] (1)--(2);
        \draw[->] (2)--(3);

        \node[circle, draw, fill=pink, scale=.6] (0x) at (0.75,-1.75){2};
        \node[circle, draw, fill=magenta, scale=.6](1x) at (2.25,-1.75){3};
        \node[circle, draw, fill=orange, scale=.6] (2x) at (3.75,-1.75){1};
        \draw[->] (0x)--(1x);
        \draw[->] (1x)--(2x);
        \node (lv) at (2.25,-2.25){$L(D)$};
\end{tikzpicture}
    
    \caption{$D$ having an up-color kernel and $L(D)$ inner colored without up-color kernel.}
    \label{contraejemplo2}
\end{figure}
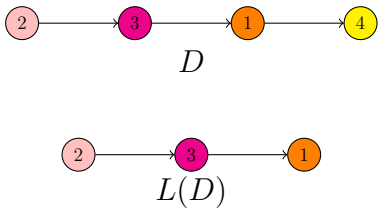

\end{document}